%%LaTeX

%%%%%%%%%%%%%%%%%%%%%%%%%%%%%%%%%%%%%%%%%%%%%%%%%%%%%%%%%%%%%%%%%%%%%%%%
%
%  Linear Programming Formulations of Singular Stochastic Control Problems
%  
%
%
%%%%%%%%%%%%%%%%%%%%%%%%%%%%%%%%%%%%%%%%%%%%%%%%%%%%%%%%%%%%%%%%%%%%%%%%
\documentclass[12pt]{article}

\usepackage{amssymb,amsmath,amsthm,amsfonts,color}
\usepackage[colorlinks=true,urlcolor=blue,linkcolor=blue,citecolor=blue,pdfstartview=FitH]{hyperref}
\usepackage{authblk}
\usepackage[sort,comma]{natbib}

\textwidth=6.5true in
\textheight=9true in
\topmargin-0.5true in
\oddsidemargin=-0.25true in

\setlength{\arraycolsep}{0.5mm}

\begin{document}

\newtheorem{thm}{Theorem}[section]
\newtheorem{lem}[thm]{Lemma}
\newtheorem{cnd}[thm]{Condition}
\newtheorem{prop}[thm]{Proposition}
\newtheorem{rem}[thm]{Remark}
\newtheorem{hyp}[thm]{Hypothesis}
\newtheorem{coro}[thm]{Corollary}
\newtheorem{example}[thm]{Example}
\newtheorem{defn}[thm]{Definition}

\newcommand{\thmref}[1]{Theorem~{\rm \ref{#1}}}
\newcommand{\lemref}[1]{Lemma~{\rm \ref{#1}}}
\newcommand{\corref}[1]{Corollary~{\rm \ref{#1}}}
\newcommand{\cndref}[1]{Condition~{\rm \ref{#1}}}
\newcommand{\propref}[1]{Proposition~{\rm \ref{#1}}}
\newcommand{\defref}[1]{Definition~{\rm \ref{#1}}}
\newcommand{\remref}[1]{Remark~{\rm \ref{#1}}}
\newcommand{\exmref}[1]{Example~{\rm \ref{#1}}}
\newcommand{\figref}[1]{Figure~{\rm \ref{#1}}}
\newcommand{\asmpref}[1]{Assumption~{\rm \ref{#1}}}
\newcommand{\sectref}[1]{Section~{\rm \ref{#1}}}

\newcommand{\EE}{\mathbb E}
\newcommand{\NN}{\mathbb N}
\newcommand{\RR}{\mathbb R}

\renewcommand {\theequation}{\arabic{section}.\arabic{equation}}
\def \non{{\nonumber}}
\def \Bbb{{\sf}}
\def \hat{\widehat}
\def \bar{\overline}

\newcommand{\pbgf}{\phi \beta \gamma f}

%\bigskip

\newcommand {\R} {\mathbb{R}}

\renewcommand {\theequation}{\arabic{section}.\arabic{equation}}
%\widenspacing

% Command \mlabel: puts label names in the right margin for
% easy reference while doing early drafts of a document.
%
\newcommand{\mlabel}[1]{\label{#1}\mshow{#1}}
\newcommand{\mshow}[1]{%
 \ifmmode
  \global\edef\lname{#1}%
  \aftergroup\mshow\aftergroup\lname
 \else
  \ifinner
   \global\edef\lname{#1}%
   \aftergroup\mshow\aftergroup\lname
  \else
   \marginpar{\footnotesize #1}%
  \fi
 \fi}

\begin{center}
{\large %Working Paper on \\ 
%\medskip

{\bf Linear Programming Formulations of\\
\medskip

Singular Stochastic Control Problems: \\
\medskip

Time-Homogeneous Problems}}
\bigskip

{\sc Thomas G. Kurtz\footnote{Departments of Mathematics and Statistics,
University of Wisconsin - Madison, 480 Lincoln Drive, Madison, WI 53706-1388,
kurtz@math.wisc.edu.}
and Richard H. Stockbridge\footnote{Department of Mathematical Sciences, University of Wisconsin Milwaukee,
Milwaukee, WI  53201--0413, stockbri@uwm.edu.  This research was supported in part by the Simons Foundation under grant award 246271.}}\\
%\medskip

%20 August 2008
%\medskip
\end{center}

{\footnotesize
{\bf Abstract.}  Conditions are established under which the optimal control of processes having both absolutely continuous and singular (with respect to time) controls are equivalent to linear programs over a space of measures on the state and control spaces.  This paper considers long-term average and discounted criteria and includes budget and resource constraints.  The linear programs optimize over measures representing the expected occupation measure of the state and absolutely continuous control processes and a similar expected occupation measure of the state and control when the singular action of the process occurs.  The evolution of these processes is characterized through an adjoint equation which the measures must satisfy in relation to the absolutely continuous and singular generators of the process.  Existence of optimal relaxed controls of feedback type are established in general while existence of an optimal form of strict control is proven under additional closedness and compactness conditions.

\medskip

{\bf Key words.}  singular controls, Markov processes,  martingale problems, 
constrained Markov processes, linear programming.

\medskip

{\bf Abbreviated Title.} Linear programming for time-homogeneous singular control.

{\bf MSC subject classifications.} Primary:  60J35, 93E20  
Secondary: 60G35, 60J25.}

\setcounter{equation}{0}

\newcommand {\doma} {\mbox{${\cal D}$ }}
\newcommand {\chat} {\mbox{$\hat{C}(E)$} }
\newcommand {\cbar} {\mbox{$\overline{C}(E \times U)$} }
\newcommand {\bofe} {\mbox{${\cal B}(E)$ }}
\newcommand {\bofu} {\mbox{${\cal B}(U)$ }}
\renewcommand {\hat}{\widehat}

\setcounter{equation}{0}

\section{Introduction} \label{sec1}
Consider processes whose dynamics are specified through a singular, controlled martingale problem for their generators, that is, by the requirement that
\begin{equation} \label{scmp}
f(X(t)) - \int_0^t Af(X(s),u(s))\, ds - \int_0^t Bf(X(s),u(s))\, d\xi(s)
\end{equation}
be a martingale for every $f$ in the common domain ${\cal D}$ of the operators $A$ and $B$.  In this expression, $X$ is the state process, $u$ is a process which controls $X$, $A$ is the  generator of $X$, $B$ is the generator that captures the singular behavior and $\xi$ is an increasing process whose collection of times of increase is typically singular with respect to Lebesgue measure.  (A relaxed formulation of the dynamics is provided in subsection~1.1.)  The decision maker chooses the controls so as to optimize some prescribed criterion.  

This paper considers long-term average and discounted criteria while a companion paper addresses first exit, finite horizon and optimal stopping criteria.  It establishes conditions under which the original control problems are equivalent to linear programs over measures representing the expected occupation of the state and (absolutely continuous) control processes and the expected occupation of the state and control when the singular behavior of the process occurs.  The evolution of these processes is characterized through an adjoint equation which the measures must satisfy in relation to the generators related to the process.

The equivalence of linear programs and discrete stochastic control problems was observed by \cite{mann:60} and has been used for Markov decision processes, for example by \cite{hern:96,hern:98,hern:99}.  Equivalence for absolutely continuous stochastic control problems in continuous time has been established under a long-term average criterion by \cite{stoc:90b}, though the optimal relaxed control was not shown to be of feedback type.  \cite{bhat:96} and \cite{kurt:98} improved this equivalence by establishing the feedback form of an optimal relaxed control for long-term average problems, and extending the results to discounted, finite-horizon and first exit criteria.  Equivalence of optimal stopping problems with linear programs has been shown for absolutely continuous processes by \cite{cho:02}; this result was extended to include singular behavior of the process by \cite{helm:07}.  \cite{taks:97} considered singular control of diffusion under a discounted criterion.  Using infinite-dimensional linear programming methods, he formulated the linear program (similarly to Section~\ref{secdisc} below) and its dual and showed the absence of a duality gap.  He related the dual linear program to a quasivariational inequality associated with the stochastic problem.  Our two companion papers extend the results of \cite{kurt:98} to include singular behavior and/or controls based on an existence result in \cite{kurt:01} and applies to five typical decision criteria.  They also extend the results of \cite{helm:07} to include control of the process before stopping.  In addition, these papers allow budget and resource constraints of the same type as the decision criterion, which were not included in \cite{kurt:98}.  

For compactness reasons, \cite{bhat:96}, \cite{kurt:98} and \cite{kurt:01} use a relaxed formulation of controls as probability measures on the control space.  The question naturally arises as to what conditions imply the existence of an optimal control in the class of strict controls.  \cite{haus:90} provided one set of conditions for controlled diffusions in the presence of budget and resource constraints while \cite{dufo:12} used the linear programming equivalences of \cite{kurt:98} to adapt the conditions of Haussmann and Lepeltier to the more general setting of processes which are solutions of controlled martingale problems.  The current paper further extends these results to include processes having singular behavior and control.

This paper considers time-homogeneous models under long-term average and discounted criteria.  The results characterize optimal stationary relaxed controls in feedback form.  The companion paper addresses models under first exit, finite horizon, and optimal stopping criteria in which the controls are naturally time-dependent.  It also provides a characterization of optimal time-dependent relaxed controls of feedback type.  

The current paper is organized as follows.  The formulation of the time-homogeneous processes, the budget and resource constraints and optimality criteria are given in Section~\ref{formulation} while  the statement of the existence result in \cite{kurt:01} is presented in Section~\ref{sec:existence}. The proof of this result in \cite{kurt:01} has a minor error; the correction is given in Section~\ref{sec:existence}.  Section~\ref{subsect:technical} contains two technical results that are used later in the paper as well as in its companion.  The paper then considers the two cost criteria in turn.  Section~\ref{seclta} considers the long-term average control problem and proves the equivalence of the linear program.  The discounted problem is examined in Section~\ref{secdisc} in which it is first necessary to establish an existence result corresponding to a discounted form of the adjoint relation before proving equivalence between the discounted problem and the discounted form of the linear program.  Both equivalence results establish existence of an optimal pair of measures for the linear program and characterize optimal absolutely continuous and singular relaxed controls of feedback type using these  measures.  Section~\ref{sect:strict} gives sufficient closedness and compactness conditions for the existence of optimal strict feedback controls while Section~\ref{sect:example} illustrates the equivalence results for the inventory problem below.

We conclude this section with two examples of singularly controlled processes.  A canonical example will also be given at the end of Section~\ref{formulation}.

\begin{example} \label{finite-fuel-ex}
Consider the finite fuel follower problem of \cite{bene:80}.  The controlled process $X$ satisfies
$$X(t) = x_0 + W(t) - \xi(t), \qquad t \geq 0,$$
in which $W$ is a standard Brownian motion process and the process $\xi$ satisfies $\xi(0)=0$ and $\int_0^\infty d|\xi|(t) \leq y$ where $|\xi|$ denotes the total variation of $\xi$.  The quantity $y$ represents the amount of fuel available.  The decision criterion is
$$J_\alpha(\xi;x_0) := \EE\left[\int_0^\infty e^{-\alpha t} X^2(t)\, dt\right].$$
Note that the decision criterion has $c_0(x) = x^2$ so only depends on the state of the process; no cost is accrued for controlling the process.

%A simple modification of the problem would impose a charge for control.  Breaking the process $\xi$ into an absolutely continuous (with respect to Lebesgue measure of time) process $\xi_{ac}$ and a singular process $\xi_s$, the dynamics can be written as
%$$X(t) = x_0 + W(t) - \xi_{ac}(t) - \xi_s(t) = x_0 + W(t) - \int_0^t u(r)\, dr - \xi_s(t), \qquad t\geq 0.$$
%The cost structure might combine a quadratic running cost in both state and control with a cost rate that is proportional to the amount of control applied:
%$$J_\alpha(u,\xi_s;x_0) = \EE\left[\int_0^\infty e^{-\alpha r} (X^2(r) + u^2(r))\, dr + \int_0^\infty e^{-\alpha r} c_1\, d|\xi_s|(r)\right].$$
\end{example}

The second example presents an inventory control problem.  When the ordering cost includes a fixed cost, this problem is in the class of impulse control problems.

\begin{example}[\cite{86:sule}] \label{inventory-example}
The inventory level process $X$ satisfies
$$X(t) = x_0 -\mu t + \sigma\, W(t) + \sum_{k=1}^\infty I_{\{\tau_k \leq t\}} Y_k$$
in which $\mu, \sigma > 0$, $W$ is a standard Brownian motion process and $(\tau,Y):=\{(\tau_k,Y_k): k \in \NN\}$ is an admissible ordering policy.  To be admissible, each $\tau_k$ must be a stopping time relative to $\{{\cal F}^W_t\}$, the filtration generated by $W$, and each $Y_k$ must be non-negative and ${\cal F}_{\tau_k}$-measurable.  

The cost structure includes holding/back-order running costs and fixed plus proportional ordering costs.  Specifically, define
\begin{equation} \label{inv-run-cost}
c_0(x) = \left\{\begin{array}{rl}
-c_b\, x, & \quad x < 0, \\
 c_h\, x, & \quad x \geq 0,
\end{array}\right.
\end{equation}
in which $c_b > 0$ denotes the back-order cost rate per unit of inventory per unit of time and similarly, $c_h > 0$ is the holding cost rate.  Also let $k_1 > 0$ denote the fixed cost and $k_2$ denote the cost per unit ordered.  The discounted cost criterion for the inventory problem is
\begin{equation} \label{inv-disc-cost}
J_\alpha(\tau,Y) = \EE\left[\int_0^\infty e^{-\alpha t} c_0(X(t))\, dt + \sum_{k=1}^\infty I_{\{\tau_k < \infty\}} (k_1 + k_2 Y_k)\right],
\end{equation}
in which $\alpha > 0$ is the discount factor, while the long-term average criterion is
\begin{equation} \label{inv-lta-cost}
J_0(\tau,Y) = \limsup_{t\rightarrow \infty} \mbox{$\frac{1}{t}$} \EE\left[\int_0^t c_0(X(s))\, ds + \sum_{k=1}^\infty I_{\{\tau_k\leq t\}} (k_1 + k_2 Y_k)\right].
\end{equation}
\end{example}

\subsection{Formulation of singular control problems} \label{formulation} 
For the dynamics of the processes, we employ the formulation of \cite{kurt:01} and refer the reader to that paper for a discussion of pre-generators (which are used in condition \ref{gencnd}).

For a complete, separable, metric space $S$, we define $M(S)$ to 
be the space of Borel measurable functions on $S$, $B(S)$ to 
be the space of bounded, measurable functions on $S$, $C(S)$ 
to be the space of continuous functions on $S$, $\overline C(S)$ to be 
the space of bounded, continuous functions on $S$, $\widehat{C}(S)$ to be
the space of continuous functions vanishing at $\infty$, ${\cal M}(S)$ to 
be the space of finite Borel measures on $S$, and ${\cal P}(S)$ to 
be the space of probability measures on $S$.  ${\cal M}(S)$ and 
${\cal P}(S)$ are topologized by weak convergence.

Let ${\cal L}_t(S)={\cal M}(S\times [0,t])$.  We define ${\cal L}(S)$ to be the space of measures $\xi$ on $S\times [0,\infty )$ such that $\xi (S\times [0,t])<\infty$, for each $t$, and topologized so that $\xi_n\rightarrow\xi$ if and only if $\int fd\xi_n\rightarrow\int fd\xi$, for every $f\in\overline C(S\times [0,\infty ))$ with supp$(f)\subset S\times [0,t_f]$ for some $t_f<\infty$.  Let $\xi_t\in {\cal L}_t(S)$ denote the restriction of $\xi$ to $S\times [0,t]$.  Note that a sequence $\{\xi^n\}\subset {\cal L}(S)$ converges to a $\xi\in {\cal L}(S)$ if and only if there exists a sequence $\{t_k\}$, with $t_k\rightarrow \infty$, such that, for each $t_k$, $\xi^n_{t_k}$ converges weakly to $\xi_{t_k}$, which in turn implies $\xi^n_t$ converges weakly to $\xi_t$ for each $t$ satisfying $\xi (S\times \{t\})=0$. 

Throughout, we will assume that the state space $E$ and control space $U$ are complete, separable, metric spaces.
\medskip

\noindent
{\em Dynamics.}\/  Let $A,B:{\cal D}\subset\bar {C}(E)\rightarrow C(E\times U)$ be linear operators and  $\nu_0\in {\cal P}(E)$.  Let $(X,\Lambda )$ be an $E\times {\cal P}(U)$-valued process and $\Gamma$ be an ${\cal L}(E\times U)$-valued random variable.  Let $\Gamma_t$ denote the  restriction of $\Gamma$ to $E\times U\times [0,t]$.    Then $(X,\Lambda ,\Gamma )$ is a relaxed solution of the {\em singular, controlled martingale problem }\/ for $(A,B,\nu_0)$ if there exists a filtration $\{{\cal F}_t\}$ such that $(X,\Lambda ,\Gamma_t)$ is $\{{\cal F}_t\}$-progressive, $X(0)$ has distribution $\nu_0$, and for every $f\in {\cal D}$, 
\begin{equation} \label{mgp}
f(X(t))-f(X(0))-\int_0^t\int_UAf(X(s),u)\Lambda_s(du)ds-\int_{E\times U\times [0,t]}Bf(x,u)\Gamma (dx\times du\times ds)
\end{equation}
is an $\{{\cal F}_t\}$-martingale.  Note we allow relaxed controls (controls represented by probability distributions on $U$) and a relaxed formulation of the singular part.
\medskip

Rather than require all control values $u\in U$ to be available for every state $x\in E$, we allow the availability of controls to depend on the state.  Let ${\cal U}\subset E\times U$ be a closed set, and define 
\[U_x=\{u:(x,u)\in {\cal U}\}.\]
Let $(X,\Lambda ,\Gamma )$ be a solution of the singular, controlled martingale problem for $(A,B,\nu_0)$.  The control $\Lambda$ and the random measure $\Gamma$ are {\em admissible\/} if for every $t$, 
\begin{eqnarray} \label{lamadmis}
&&\int_0^tI_{{\cal U}}(X(s),u)\Lambda_s(du)ds=t,\mbox{\rm \ and}\ \\
\label{gamadmis}
&&\Gamma ({\cal U}\times [0,t])=\Gamma (E\times U \times [0,t]).
\end{eqnarray}
Note that condition (\ref{lamadmis}) essentially requires $\Lambda_s$ to have support in $U_x$ when $X(s)=x$.

This paper examines time-homogeneous models.  We assume that the absolutely continuous generator $A$ and the singular generator $B$ have the following properties.  

\begin{cnd}\label{gencnd}
\begin{itemize}
\item[(i)] $A,B:{\cal D}\subset\overline C(E)\rightarrow C(E\times U)$, $1\in {\cal D}$,
and $A1 = 0, B1 = 0$.

\item[(ii)] There exist $\psi_A, \psi_B\in C(E \times U)$, 
$\psi_A, \psi_B \geq 1$, and constants $ a_f, b_f$, 
$f\in {\cal D}$,
such that
\[|Af(x,u)| \leq a_f\psi_A(x,u),\qquad |Bf(x,u)| \leq b_f\psi_B(x,u),\qquad \forall (x,u) \in {\cal U}.\]

\item[(iii)] There exists a countable collection $\{f_k\} \subset {\cal D}$ such that (\ref{mgp}) being a martingale for all $f_k$ implies (\ref{mgp}) is a martingale for all $f \in {\cal D}$ so that $(X,\Lambda,\Gamma)$ is a solution of the singular, controlled martingale problem for $(A,B,\nu_0)$.
%Defining $(A_0,B_0)=\{(f,\psi_A^{-1}Af,\psi_B^{-1}Bf): f \in {\cal D}\}$, $(A_0,B_0)$  is {\em separable\/} in the sense that there exists a countable collection $\{g_k\}\subset {\cal D}$ such that $(A_0,B_0)$ is contained in the bounded, pointwise closure of the linear span of $\{(g_k,A_0g_k,B_0g_k) = (g_k,\psi_A^{-1}Ag_k,\psi_B^{-1}Bg_k)\}$.

\item[(iv)] For each $u\in U$, the operators $A_u$ and $B_u$ defined by 
$A_uf(x)=Af(x,u)$ and $B_uf(x) = Bf(x,u)$ are pre-generators.

\item[(v)] ${\cal D}$ is closed under multiplication and separates points.
\end{itemize}
\end{cnd}

\begin{rem}
Condition~\ref{gencnd}(iv) is quite general.  For example, if $E$ is compact, $A: C(E)\rightarrow C(E)$, and $A$ satisfies the positive maximum principle, then $A$ is a pre-generator.  If $E$ is locally compact, $A:\hat{C}(E)\rightarrow \hat{C}(E)$, and $A$ satisfies the positive maximum principle, then $A$ can be extended to a pre-generator on $E^{\Delta}$, the one-point compactification of $E$.  We refer the reader to \cite{kurt:01} for further explanation and examples.
\end{rem}

\begin{rem} 
\begin{itemize}
\item[(i)] Condition~\ref{gencnd}(ii) is used to obtain bounded functions $A_\psi f = Af/\psi$ and $B_\psi f = Bf/\psi$ for each $f \in {\cal D}$ and for compactness criteria on the space $U$ of controls.  Different conditions may also be sufficient to obtain the results.  In particular, when both the state and control spaces are compact, the condition is trivially satisfied.  
\item[(ii)] The separability requirement of Condition~\ref{gencnd}(iii) is used within the framework of a complete, separable metric space to compactify the state space.  This condition can be avoided when the state space is compact and can be replaced by a simpler condition when $E$ is locally compact and ${\cal D}\subset \widehat{C}(E)$.  The reader is referred to \cite{kurt:98} for the latter condition.
\end{itemize}
\end{rem}
\medskip

\noindent
{\em Decision Criteria.}\/  For simplicity of notation, we denote the relaxed controls by the pair $(\Lambda,\Gamma)$ for a solution $(X,\Lambda,\Gamma)$ of the singular, controlled martingale problem for $(A,B,\nu_0)$; the relaxed singular control is more properly given by a transition function $\eta(x,s,du)$ satisfying $\Gamma(G_1\times G_2) = \int_{G_1}\eta(x,s,G_2)\, \Gamma(dx \times U\times ds)$ for $G_1\in {\cal B}(E\times [0,\infty))$ and $G_2\in {\cal B}(U)$.  

To compare controls, we consider two standard criteria, namely, the long-term average cost
\begin{equation} \label{ltacost}
J_0(\Lambda,\Gamma) = \begin{array}[t]{l} \displaystyle \limsup_{t\rightarrow \infty} t^{-1} \EE\left[\int_0^t \int_U c_0(X(s),u)\, \Lambda_s(du) ds \right. \\ \displaystyle \left.
\qquad \qquad \qquad + \int_{E \times U \times [0,t]} c_1(x,u) \Gamma(dx \times du \times ds)\right] \end{array}
\end{equation}
and the discounted cost
\begin{equation} \label{disccost}
J_\alpha(\Lambda,\Gamma;\nu_0) = \begin{array}[t]{l} \displaystyle
\EE\left[\int_0^\infty \int_U e^{-\alpha s} c_0(X(s),u)\, \Lambda_s(du) ds \right. \\ \displaystyle 
\left. \qquad \qquad + \int_{E \times U \times [0,\infty)} e^{-\alpha s} c_1(x,u)\, \Gamma(dx \times du \times ds)\right]
\end{array}
\end{equation}
in which $c_0$ is the cost rate related to the absolutely continuous evolution and control of the process and $c_1$ is the cost arising from the singular actions and behavior.
\medskip

\noindent
{\em Budget and Resource Constraints.}\/  \cite{haus:90} discusses how to represent hard (a.s.) constraints as soft (in mean) constraints by allowing the functions to take value $\infty$.  We therefore express these additional constraints as soft constraints.  

For $m<\infty$ and $i = 1,\ldots, m$, let $g_i,h_i: E\times U \rightarrow \RR^+$ be lower semicontinuous and bounded below, and $0 <  K_i < \infty$.  We allow additional constraints of the same form as the decision criterion.  Thus for control problems having the long-term average criterion $J_0(\Lambda,\Gamma)$, these constraints require 
\begin{equation} \label{lta-budget-constrs}
\begin{array}{r} \displaystyle
\limsup_{t\rightarrow \infty} \mbox{$\frac{1}{t}$} \EE\left[\int_0^t g_i(X(s),u)\, \Lambda_s(du)\, ds + \int_{E\times U\times [0,t]} h_i(x,u)\, \Gamma(dx\times du\times ds)\right] \leq K_i, \\
i=1,\ldots,m,
\end{array}
\end{equation}
while for problems using the discounted cost criterion $J_\alpha(\Lambda,\Gamma)$, the additional restrictions are 
\begin{equation} \label{disc-budget-constrs}
\begin{array}{r} \displaystyle 
\EE\left[\int_0^\infty e^{-\alpha s} g_i(X(s),u)\, \Lambda_s(du)\, ds + \int_{E\times U\times [0,\infty)} e^{-\alpha s} h_i(x,u)\, \Gamma(dx\times du\times ds)\right] \leq K_i, \\
i = 1, \ldots, m.
\end{array}
\end{equation}

We place additional conditions on the cost and budget functions and on the singular generator $B$.  First, note that a function $c: S \rightarrow \RR$ is inf-compact if for each $a>0$, the set $\{s: c(s) \leq a\}$ is compact; in this $S$ is a topological space.

\begin{cnd} \label{costcnd1}
\begin{itemize}
\item[(a)] The cost functions $c_0$ and $c_1$  are non-negative, lower semi-continuous and inf-compact.
\item[(b)] For some positive constants $a_0,b_0,a_1,b_1$ and $0 \leq \beta < 1$,
$$\psi_A(x,u) \leq a_0 c_0^\beta(x,u) + b_0,\qquad \psi_B(x,u) \leq a_1 c_1(x,u) + b_1, \qquad \forall (x,u) \in E \times U.$$ 
\item[(c)] Either the singular cost function $c_1$ or a singular budget function $h_i$ is positive and bounded away from $0$, for some $i \in \{1, \ldots, m\}$. 
\item[(d)] Either $Bf$ is bounded for every $f\in {\cal D}$, or there exists a compactification $\overline{E\times U}$ of $E\times U$ such that both (i) and (ii) hold: 
\begin{itemize}
\item[(i)] for $(x,u) \in \overline{E\times U} - E\times U$, defining 
$$\frac{c_1(x,u)}{\psi_B(x,u)} = \limsup_{\mbox{\scriptsize$\begin{array}{c} (y,v)\rightarrow (x,u) \\ (y,v) \in E\times U\end{array}$}} \frac{c_1(y,v)}{\psi_B(y,v)},$$ 
the function $c_1/\psi_B$ on $\overline{E\times U}$ is lower semi-continuous;  
\item[(ii)] for each $f \in {\cal D}$, $Bf/\psi_B$ has a continuous extension to $\overline{E\times U}$.
\end{itemize}
\end{itemize}
\end{cnd}

We conclude this subsection with an example of a controlled process satisfying the above conditions.

\begin{example}
A canonical example would be to take $A,B_k\subset C(E)\times C(E)$ to be generators for  processes on a compact state space $E$ (e.g., 
$E={\RR}^d\cup \{\infty \}$) with common domain ${\cal D}$, $U=[0,\infty )^m$,  $\bar {U}=U\cup\partial U$ to be the compactification in which $\partial U=\{z:z_k\geq 0,\sum_{k=1}^mz_k=1\}$ and $u^n\in U\rightarrow z$ if $\sum_k u_k^n\rightarrow\infty$ and $\frac 1{\sum_{k=1}^m u_k^n} u^n \rightarrow z$ in ${\RR}^m$, and let
\[Af(x,u)=Af(x),\quad \mbox{ and } \quad Bf(x,u)=\sum_{k=1}^mu_kB_kf(x).\]
Take
\[c_1(x,u)=\sum_{k=1}^mu_k\beta_k(x),\]
where the $\beta_k$ are continuous and strictly positive, and
\[\psi_B(x,u)=\sum_{k=1}^mu_k+1.\]
Then, under modest assumptions on $c_0$, Condition 1.5 will be satisfied.
\end{example}

\subsection{Existence of stationary solutions.} \label{sec:existence}
The equivalence of the linear programming and stochastic control formulations of the problems
rely on the existence of stationary solutions to the singular, controlled martingale problems
established in \cite{kurt:01}.  The stationary solutions are then used to define new
solutions appropriate for the optimality criterion under consideration.  

For completeness of exposition, we state the existence results.  In addition, the statement is correct, but the proof given in \cite{kurt:01} has a small error.  We provide the correction to the proof in this paper.

We say that an ${\cal L}(E)$-valued random variable has 
stationary increments if for $a_i<b_i$, $i=1,\ldots ,m$,
the distribution of 
$(\Gamma (H_1\times (t+a_1,t+b_1]),\ldots ,\Gamma (H_m\times (t+a_
m,t+b_m]))$ does not 
depend on $t$.  Let $X$ be a measurable stochastic process 
defined on a complete probability space $(\Omega ,{\cal F},P)$, and let 
${\cal N}\subset {\cal F}$ be the collection of null sets.
Then ${\cal F}_t^X=\sigma (X(s):s\leq t)$, $\bar {{\cal F}}^X_t={\cal N}\vee {\cal F}_t^X$ will denote the 
completion of ${\cal F}_t^X$, and $\bar {{\cal F}}^X_{t+}=\cap_{s>t}\bar {{\cal F}}^X_s$.   Let $E_1$ and 
$E_2$ be complete, separable metric spaces.  $q:E_1\times {\cal B}(E_2)\rightarrow [0,1]$ is a 
{\em transition function\/} from $E_1$ to $E_2$ if for each $x\in E_1$, 
$q(x,\cdot )$ is a Borel probability measure on $E_2$, and for each 
$D\in {\cal B}(E_2)$, $q(\cdot ,D)\in B(E_1)$.  If $E=E_1=E_2$, then we say 
that $q$ be a transition function on $E$.

\begin{thm}\ \label{thm:stationary-existence}
Let $A$, $B$, $\psi_A$ and $\psi_B$ satisfy Condition~\ref{gencnd}.
Suppose that $\mu_0\in {\cal P}(E\times U)$ and $\mu_1\in {\cal M}(E\times U)$ satisfy 
\begin{equation}
\mu_0({\cal U})=\mu_0(E\times U)=1,\qquad \mu_1({\cal U})=\mu_1(E\times U)< \infty,\label{muadmis0}
\end{equation}

\begin{equation}\label{integrability}
\int\psi_A(x,u)\mu_0(dx\times du) + \int\psi_B(x,u)\mu_1(dx\times du)<\infty ,
\end{equation}
and
\begin{equation} \label{id.1}
\int_{E\times U}Af(x,u)\,\mu_0(dx\times du) + \int_{E\times U}Bf(x,u)\,\mu_1(dx\times du)=0,\qquad \forall f
\in {\cal D}.
\end{equation}
For $i=0,1$, let $\mu_i^E$ be the state marginal of $\mu_i$ and 
let $\eta_i$ be the transition function from $E$ to $U$ such that
\begin{equation} \label{etadefs}
\mu_i(dx\times du)=\eta_i(x,du)\mu_i^E(dx).
\end{equation}

Then there exist a process $X$ and a random measure $\Gamma$ on $E\times [0,\infty )$, adapted to $\{\bar {{\cal F}}_{t+}^X\}$, such that:
\begin{description}
\item[$\circ$] $X$ is stationary and $X(t)$ has distribution $\mu^E_0$;
\item[$\circ$] $\Gamma$ has stationary increments, 
$\Gamma (E\times [0,t])$ is finite for each $t$, and
$\EE[\Gamma (\cdot\times [0,t])]=t\mu^E_1(\cdot )$; and
\item[$\circ$] For each $f\in {\cal D}$,
\begin{eqnarray} \label{mg2}\nonumber
f(X(t)) - f(X(0))&-&\int_0^t\int_UAf(X(s),u)\,\eta_0(X(s),du)ds\\
&-&\int_{E\times [0,t]}\int_UBf(y,u)\,\eta_1(y,du)\,\Gamma(dy\times ds)
\end{eqnarray}
is an $\{\bar {{\cal F}}^X_{t+}\}$-martingale.
\end{description}
\end{thm}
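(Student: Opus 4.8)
The plan is to realize the claim as an Echeverria--Weiss-type stationary existence statement after first passing to feedback form and then handling the singular part by a random time change. First I would absorb the controls into the feedback generators on the state space by setting $\tilde A f(x)=\int_U Af(x,u)\,\eta_0(x,du)$ and $\tilde B f(x)=\int_U Bf(x,u)\,\eta_1(x,du)$ for $f\in{\cal D}$; these are well defined and measurable because of the disintegrations (\ref{etadefs}), and the bounds of Condition~\ref{gencnd}(ii) together with the integrability hypothesis (\ref{integrability}) make $\tilde A f$ integrable against $\mu_0^E$ and $\tilde B f$ integrable against $\mu_1^E$. Integrating the adjoint relation (\ref{id.1}) through the disintegration then yields the reduced relation $\int_E \tilde A f\,d\mu_0^E+\int_E\tilde B f\,d\mu_1^E=0$ for every $f\in{\cal D}$, so that the target expression (\ref{mg2}) is written entirely through $\tilde A$ and $\tilde B$ on $E$, and the problem becomes the construction of a stationary state process $X$ together with a singular random measure $\Gamma$ on $E\times[0,\infty)$.

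Next I would construct $(X,\Gamma)$ as the image, under a random time change, of a stationary solution of an ordinary (non-singular) martingale problem on a ``stretched'' time scale $\tau$. Informally, on one regime ordinary time advances and the process is driven by $\tilde A$, while on a complementary regime ordinary time is frozen and the process is driven by $\tilde B$, the frozen set becoming Lebesgue-null (i.e.\ singular) after the change of variables. Writing $\lambda=\mu_1(E\times U)<\infty$ from (\ref{muadmis0}), the candidate stationary occupation in stretched time assigns mass $(1+\lambda)^{-1}$ to the $\tilde A$-regime with spatial law $\mu_0^E$ and mass $\lambda(1+\lambda)^{-1}$ to the $\tilde B$-regime with spatial law $\mu_1^E/\lambda$; the reduced adjoint relation above is precisely the infinitesimal balance condition that this occupation measure must satisfy for the stretched-time process to be stationary. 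Because each $A_u,B_u$ is a pre-generator (Condition~\ref{gencnd}(iv)) and ${\cal D}$ separates points and is closed under multiplication (Condition~\ref{gencnd}(v)), I would obtain the stretched-time stationary process from the classical Echeverria--Weiss stationary existence theory for pre-generator martingale problems, using the countable determining family of Condition~\ref{gencnd}(iii) to reduce the martingale requirement to a countable set of test functions.

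The final step inverts the time change. Letting $\gamma(\tau)$ be the ordinary time elapsed by stretched time $\tau$ (the occupation of the $\tilde A$-regime up to $\tau$), I would set $X(t)=\hat Y(\gamma^{-1}(t))$ and define $\Gamma(\,\cdot\times[0,t])$ as the state-occupation accumulated during the $\tilde B$-regime over the stretched-time interval corresponding to $[0,t]$. Stationarity of the stretched-time process together with the regime proportions $(1+\lambda)^{-1}$ and $\lambda(1+\lambda)^{-1}$ then gives that $X$ is stationary with one-dimensional law $\mu_0^E$, that $\Gamma$ has stationary increments, and that $\EE[\Gamma(\,\cdot\times[0,t])]=t\,\mu_1^E(\,\cdot)$, with $\Gamma(E\times[0,t])<\infty$ because $\lambda<\infty$. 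The martingale property (\ref{mg2}) would follow by time-changing the stretched-time martingale $f(\hat Y(\tau))-f(\hat Y(0))-\int_0^\tau{\cal C}f\,dr$ and splitting its compensator along the two regimes into $\int_0^t\tilde A f(X(s))\,ds$ and $\int_{E\times[0,t]}\tilde B f\,d\Gamma$; the integrability hypothesis (\ref{integrability}) with Condition~\ref{gencnd}(ii) bounds the compensators in $L^1$, so the result is a genuine $\{\bar{\cal F}^X_{t+}\}$-martingale rather than merely a local one.

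I expect the main obstacle to lie exactly in this time-change/stationarity bookkeeping. The cross terms $\int_E\tilde A f\,d\mu_1^E$ and $\int_E\tilde B f\,d\mu_0^E$ are not controlled by (\ref{id.1}), which shows that no naive single-clock combination of $\tilde A$ and $\tilde B$ can be stationary: the $\tilde B$-action must occur on a genuinely singular time set whose state-occupation is $\mu_1^E$ rather than $\mu_0^E$, and engineering the regime dynamics so that the enlarged process is honestly stationary with these two distinct regime occupations is the delicate point. Equally delicate is the adaptedness claim, namely recovering $\gamma^{-1}$ and $\Gamma$ as functionals measurable with respect to the completed right-continuous filtration $\{\bar{\cal F}^X_{t+}\}$ generated by $X$ alone, rather than by the larger stretched-time filtration. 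This is the natural place for the small error in \cite{kurt:01} to reside, and I would expect the correction to consist of a careful argument producing $\Gamma$ as an $\{\bar{\cal F}^X_{t+}\}$-adapted random measure with the asserted stationary increments and mean measure, together with the verification that (\ref{mg2}) is an $\{\bar{\cal F}^X_{t+}\}$-martingale.
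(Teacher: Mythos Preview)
Your proposal differs from the paper in two essential respects, one methodological and one substantive.

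First, the paper does not prove the theorem from scratch. It simply cites the construction in \cite{kurt:01}, which it describes as building $X$ and $\Gamma$ ``as limits of approximating quantities,'' and then supplies a short correction to that construction. Your outline via a two-regime stretched-time process and random time change is a different route from the approximation scheme of \cite{kurt:01}. You yourself identify the genuine obstruction to your route: the adjoint relation (\ref{id.1}) does not control the cross integrals $\int_E\tilde A f\,d\mu_1^E$ and $\int_E\tilde B f\,d\mu_0^E$, so no simple regime-switching generator on $E\times\{0,1\}$ has the naive occupation measure as its stationary law. You flag this as ``the delicate point'' but do not resolve it; as written, the proposal is a plan with an acknowledged gap rather than a proof.

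Second, and more importantly for the task at hand, you misidentify both the location and the nature of the error in \cite{kurt:01}. The error is not in the adaptedness of $\Gamma$ to $\{\bar{\cal F}^X_{t+}\}$ nor in the martingale verification. The construction in \cite{kurt:01} produces a $\Gamma_1$ that is already adapted and already makes (\ref{mg2}) a martingale, but whose mean measure satisfies only the inequality
\[
\EE[\Gamma_1(\cdot\times[0,t])]\ \leq\ t\,\mu_1^E(\cdot)
\]
rather than the asserted equality. Writing $\mu_1^*$ for the actual mean increment and $\tilde\mu_1=\mu_1^E-\mu_1^*\geq 0$, the paper observes that the martingale property of (\ref{mg2}) forces
\[
\int_E\int_U Bf(x,u)\,\eta_1(x,du)\,\tilde\mu_1(dx)=0,\qquad f\in{\cal D},
\]
so replacing $\Gamma_1$ by $\Gamma=\Gamma_1+\tilde\mu_1\times m$ (with $m$ Lebesgue measure) leaves (\ref{mg2}) unchanged while restoring the equality $\EE[\Gamma(\cdot\times[0,t])]=t\,\mu_1^E(\cdot)$. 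That one-line additive correction is the entire content of the paper's proof; your expectation that the fix concerns filtration measurability is off target.
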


\begin{rem}
Observe that $\Gamma$ is adapted to $\{\bar {{\cal F}}_{t+}^{\;X}\}$ though the definition of the solution of a 
singular, controlled martingale problem did not require it. 
\end{rem}

\begin{proof}
The proof of this theorem in \cite{kurt:01} consists of constructing the desired process $X$ and random measure $\Gamma$ as limits of approximating quantities.  The measure constructed, however, only satisfies the inequality
$$\EE[\Gamma(\cdot \times [0,t])] \leq t \mu_1^E(\cdot)$$
rather than the claimed equality.

Recalling that $\Gamma$ has stationary increments, define $\mu_1^*$ so that
$$\mu_1^*(C) t = \EE[\Gamma(C \times [0,t])], \qquad C \in {\cal B}(E).$$
$\Gamma$ as constructed does satisfy 
$$\EE\left[\int_{E\times [0,t]} \int_U Bf(x,u)\eta_1(x,du)\, \Gamma(dx \times ds)\right] = \int_0^t \int_E \int_U Bf(x,u) \eta_1(x,du)\, \mu_1^E(dx)\, ds, \;\; f \in {\cal D}$$
and hence, setting $\tilde{\mu}_1(C) = \mu_1^E(C) - \mu_1^*(C)$, we have
\begin{equation} \label{mu1-id1}
\int_E \int_U Bf(x,u) \eta_1(x,du)\, \tilde{\mu}_1(dx) = 0, \qquad f \in {\cal D}.
\end{equation}
Consequently, denoting the measure constructed in the proof in \cite{kurt:01} as $\Gamma_1$, rather than $\Gamma$, and setting $\Gamma = \Gamma_1 + \tilde{\mu}_1\times m$, where $m$ denotes Lebesgue measure on $[0,\infty)$, (\ref{mg2}) is a martingale (adding $\tilde{\mu}_1 \times m$ does not change the process) and the newly defined $\Gamma$ does satisfy $\EE[\Gamma(\cdot \times [0,t])] = t \mu_1^E(\cdot)$.

In the case of constrained processes, there frequently exists $\varphi \in {\cal D}$ such that $B\varphi(x,u) > 0$, for $(x,u) \in {\cal U}$.  Then (\ref{mu1-id1}) implies $\tilde{\mu}_1 = 0$ and the original construction and the statements of other results are correct.  In the control setting, $\mu_0$ and $\mu_1$ are frequently chosen to minimize an expression of the form
$$\int_{E\times U} c_0(x,u)\, \mu_0(dx\times du) + \int_{E\times U} c_1(x,u)\, \mu_1(dx\times du)$$
subject to (\ref{id.1}), and if $c_1(x,u) > 0$, for $(x,u) \in {\cal U}$, it again follows that $\tilde{\mu}_1 = 0$.
\end{proof}

Theorem \ref{thm:stationary-existence} can in turn be used to extend 
the results in the Markov setting to operators with 
range in $M(E\times U)$, that is, we relax the continuity assumptions of earlier results.  We state the
result but retain the continuity assumptions in the sequel.

\begin{coro}\label{echev2}
Let $E$ and $U$ be complete, separable metric spaces.
Let $\hat{A},\hat{B}:{\cal D}\subset\overline C(E)\rightarrow M(E\times U)$, 
and suppose  $\hat{\mu}_0\in {\cal P}(E\times U)$ 
and $\hat{\mu}_1\in {\cal M}(E\times U)$ satisfy
\begin{equation}
\int_{E\times U} \hat{A}f(x,u)\,\hat{\mu}_0(dx\times du) 
+ \int_{E\times U} \hat{B}f(x,u)\,\hat{\mu}_1(dx \times du)=0,\qquad \forall f\in {\cal D}.
\label{invariant}
\end{equation}
Assume that there exist a complete, separable, metric space $V$, functions $\psi_A$ and $\psi_B$ and operators  $A,B:{\cal D}\rightarrow C(E\times U \times V)$ satisfying Condition \ref{gencnd}, and transition functions $\eta_0$ and $\eta_1$ from $E\times U$ to $V$
such that 
\[\hat{A}f(x,u)=\int_V Af(x,u,v)\,\eta_0(x,u,dv),\quad\hat{B}f(x,u)=\int_V Bf(x,u,v)\,\eta_1(x,u,dv),
\quad\forall f\in {\cal D},\]
and 
\[\int_{E\times U \times V}\psi_A(x,u,v)\,\eta_0(x,u,dv)\hat{\mu}_0(dx\times du) 
+ \int_{E\times U \times V}\psi_B(x,u,v)\,\eta_1(x,u,dv)\hat{\mu}_1(dx\times du)<\infty .\]
Then there exists a solution 
$(X,\hat{\Lambda},\hat{\Gamma})=(X,\hat{\eta}_0(X,\cdot),\hat{\eta}_1\,\Gamma )$ 
of the singular martingale problem 
for $(\hat {A},\hat {B},\hat{\mu}_0)$, where $\hat{\eta}_i$ satisfies 
$\hat{\mu}_i(dx \times du) = \hat{\eta}_i(x,du) \hat{\mu}_i^E(dx)$,
such that $X$ is stationary and $\Gamma$ has stationary increments.
\end{coro}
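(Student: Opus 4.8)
The plan is to reduce the corollary to \thmref{thm:stationary-existence} by absorbing the auxiliary space $V$ into the control space. Set $\tilde U=U\times V$, which is again a complete, separable metric space, and regard $A,B:{\cal D}\subset\overline C(E)\rightarrow C(E\times\tilde U)$ as the absolutely continuous and singular generators of a singular, controlled martingale problem with state space $E$ and control space $\tilde U$; these satisfy \cndref{gencnd} by hypothesis. On $E\times\tilde U$ define
\[\mu_0(dx\times du\times dv)=\eta_0(x,u,dv)\,\hat\mu_0(dx\times du),\qquad \mu_1(dx\times du\times dv)=\eta_1(x,u,dv)\,\hat\mu_1(dx\times du).\]
Because $\eta_0(x,u,\cdot)$ and $\eta_1(x,u,\cdot)$ are probability measures, $\mu_0\in{\cal P}(E\times\tilde U)$ and $\mu_1\in{\cal M}(E\times\tilde U)$, and the $E$-marginals satisfy $\mu_i^E=\hat\mu_i^E$.

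Next I would verify the hypotheses of \thmref{thm:stationary-existence} for $(A,B,\mu_0)$. Taking the availability set to be all of $E\times\tilde U$ makes (\ref{muadmis0}) automatic, and (\ref{integrability}), namely $\int\psi_A\,d\mu_0+\int\psi_B\,d\mu_1<\infty$, is exactly the integrability hypothesis of the corollary rewritten through the definitions of $\mu_0,\mu_1$. For the adjoint relation (\ref{id.1}), Fubini's theorem together with the disintegration defining $\mu_i$ gives, for every $f\in{\cal D}$,
\[\int_{E\times\tilde U}Af\,d\mu_0=\int_{E\times U}\Big(\int_V Af(x,u,v)\,\eta_0(x,u,dv)\Big)\hat\mu_0(dx\times du)=\int_{E\times U}\hat Af\,d\hat\mu_0,\]
and likewise $\int_{E\times\tilde U}Bf\,d\mu_1=\int_{E\times U}\hat Bf\,d\hat\mu_1$; summing and invoking (\ref{invariant}) yields (\ref{id.1}).

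\thmref{thm:stationary-existence} then produces a stationary process $X$ with $X(t)\sim\mu_0^E=\hat\mu_0^E$ and a random measure $\Gamma$ on $E\times[0,\infty)$ with stationary increments and $\EE[\Gamma(\cdot\times[0,t])]=t\,\mu_1^E(\cdot)=t\,\hat\mu_1^E(\cdot)$, for which (\ref{mg2}) is a martingale, where $\tilde\eta_0,\tilde\eta_1$ are the transition functions from $E$ to $\tilde U$ disintegrating $\mu_i$ over $\mu_i^E$. The final step is to collapse the $V$-variable. Since $\mu_i^E=\hat\mu_i^E$, both $\tilde\eta_i(x,\cdot)$ and the composite kernel $\eta_i(x,u,dv)\,\hat\eta_i(x,du)$ disintegrate $\mu_i$ over the same marginal, so by uniqueness of disintegration they agree for $\mu_i^E$-a.e.\ $x$. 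Substituting this factorization into (\ref{mg2}) and integrating out $v$ replaces $\int_V Af(x,u,v)\,\eta_0(x,u,dv)$ by $\hat Af(x,u)$ and $\int_V Bf(y,u,v)\,\eta_1(y,u,dv)$ by $\hat Bf(y,u)$. With $\hat\Lambda_s(\cdot)=\hat\eta_0(X(s),\cdot)$ and $\hat\Gamma(dx\times du\times ds)=\hat\eta_1(x,du)\,\Gamma(dx\times ds)$, the martingale (\ref{mg2}) becomes exactly the compensated functional (\ref{mgp}) for $(\hat A,\hat B)$, so $(X,\hat\Lambda,\hat\Gamma)$ solves the singular martingale problem for $(\hat A,\hat B,\hat\mu_0)$, the stationarity being inherited from $X$ and $\Gamma$.

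I expect the main obstacle to be the measure-theoretic bookkeeping in this last step: the factorization $\tilde\eta_i=\eta_i\,\hat\eta_i$ holds only up to a marginal-null set, and one must confirm that these exceptional sets never contribute. This is handled by Fubini in the $ds$-integral, whose expected occupation measure is $\mu_0^E=\hat\mu_0^E$ by stationarity, and by the identity $\EE[\Gamma(\cdot\times[0,t])]=t\,\hat\mu_1^E$ in the $\Gamma$-integral, so that an integrand vanishing $\hat\mu_i^E$-a.e.\ contributes zero in mean and hence almost surely; the compensator is therefore unchanged by the substitution. Everything else is a direct transcription of the hypotheses into the framework of \thmref{thm:stationary-existence}.
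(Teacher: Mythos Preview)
Your proposal is correct and follows essentially the same route as the paper: the paper's proof simply defines $\mu_i(dx\times du\times dv)=\eta_i(x,u,dv)\,\hat\mu_i(dx\times du)=\eta_i(x,u,dv)\,\hat\eta_i(x,du)\,\hat\mu_i^E(dx)$ and then says the corollary follows immediately from \thmref{thm:stationary-existence}. Your write-up fills in the details the paper omits---verifying (\ref{muadmis0}), (\ref{integrability}), (\ref{id.1}) and carrying out the collapse of the $V$-variable---and your remark about the null-set issue in the factorization $\tilde\eta_i=\eta_i\,\hat\eta_i$ is a reasonable point of care that the paper leaves implicit in the displayed double equality for $\mu_i$.
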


\begin{proof}  Define 
$$\mu_0(dx\times du\times dv)=\eta_0(x,u,dv)\hat{\mu}_0(dx\times du) = \eta_0(x,u, dv) \hat{\eta}_0(x,du)
\hat{\mu}_0^E(dx) $$
and 
$$\mu_1(dx\times du\times dv)=\eta_1(x,u,dv)\hat{\mu}_1(dx\times du)= \eta_1(x,u, dv) \hat{\eta}_1(x,du)
\hat{\mu}_1^E(dx).$$  The 
corollary follows immediately from Theorem~\ref{thm:stationary-existence}.  
\end{proof}

Though Theorem~\ref{thm:stationary-existence} establishes the existence of a stationary solution $(X,\Lambda,\Gamma)$ of the singular, controlled martingale problem for $(A,B)$ corresponding to any pair $(\mu_0,\mu_1)$ satisfying (\ref{id.1}), it is still necessary to address the budget and resource constraints.  

\begin{coro} \label{cor:lta-budget-constrs-meas}
Let $A$, $B$, $\psi_A$, $\psi_B$, $\mu_0$ and $\mu_1$ satisfy the hypotheses of Theorem~\ref{thm:stationary-existence} and let $X$ be the resulting stationary process and $\Gamma$ be the resulting random measure having stationary increments.  Define the relaxed control $\Lambda$ by $\Lambda_s(\cdot) = \eta_0(X(s),\cdot)$ and the $E\times U \times [0,\infty)$-valued random variable $\widetilde\Gamma$ such that 
$$\widetilde\Gamma(G_1 \times G_2 \times [0,t]) = \int_{G_1\times [0,t]} \eta_1(x,G_2)\, \Gamma(dx\times ds), \qquad G_1\in {\cal B}(E), G_2\in {\cal B}(U).$$
Then the solution $(X,\Lambda,\widetilde\Gamma)$ of the singular, controlled martingale problem for $(A,B)$ satisfies the budget and resource constraints (\ref{lta-budget-constrs}) if and only if
\begin{equation} \label{meas-lta-budget-constrs}
\int g_i(x,u)\, \mu_0(dx \times du) + \int h_i(x,u)\, \mu_1(dx\times du) \leq K_i,\qquad i=1,\ldots,m.
\end{equation}
\end{coro}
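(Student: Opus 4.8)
The plan is to evaluate the two expectations appearing in the bracket of (\ref{lta-budget-constrs}) explicitly and to observe that stationarity forces each of them to be \emph{exactly} linear in $t$, so that $t^{-1}$ times the bracket is a constant independent of $t$ and the $\limsup$ collapses to that constant, which is precisely the left-hand side of (\ref{meas-lta-budget-constrs}). The equivalence for each $i$ then follows by comparing with $K_i$. All interchanges below are licensed by Tonelli because $g_i,h_i\ge 0$; no integrability is needed, since every quantity lives in $[0,\infty]$ and the two constraints degenerate consistently when an integral is infinite.

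First I would treat the absolutely continuous term. Since $X$ is stationary with $X(s)\sim\mu_0^E$ (Theorem~\ref{thm:stationary-existence}) and $\Lambda_s(\cdot)=\eta_0(X(s),\cdot)$, interchanging expectation and the time integral and then applying the disintegration $\mu_0(dx\times du)=\eta_0(x,du)\mu_0^E(dx)$ gives
$$\EE\left[\int_0^t \int_U g_i(X(s),u)\,\Lambda_s(du)\,ds\right]=t\int_{E\times U} g_i(x,u)\,\mu_0(dx\times du).$$

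Next I would treat the singular term. Extending the defining relation for $\widetilde\Gamma$ from product indicators $I_{G_1}(x)I_{G_2}(u)I_{[0,t]}(s)$ to arbitrary non-negative measurable integrands by the monotone class theorem, I rewrite
$$\int_{E\times U\times[0,t]} h_i(x,u)\,\widetilde\Gamma(dx\times du\times ds)=\int_{E\times[0,t]}\left(\int_U h_i(x,u)\,\eta_1(x,du)\right)\Gamma(dx\times ds).$$
The integrand now depends on $x$ alone, so taking expectation and invoking the intensity identity $\EE[\Gamma(\cdot\times[0,t])]=t\mu_1^E(\cdot)$ from Theorem~\ref{thm:stationary-existence}, together with the disintegration $\mu_1(dx\times du)=\eta_1(x,du)\mu_1^E(dx)$, yields
$$\EE\left[\int_{E\times U\times[0,t]} h_i\,d\widetilde\Gamma\right]=t\int_{E\times U} h_i(x,u)\,\mu_1(dx\times du).$$
Summing the two contributions, the bracketed expectation in (\ref{lta-budget-constrs}) equals $t\bigl(\int g_i\,d\mu_0+\int h_i\,d\mu_1\bigr)$; dividing by $t$ removes the $t$-dependence, so the $\limsup$ is exactly this constant, and the equivalence with (\ref{meas-lta-budget-constrs}) is immediate.

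The step requiring the most care is the singular term, and it is worth emphasizing why the corollary is an \emph{if and only if} rather than a one-way implication. The equivalence hinges on the equality $\EE[\Gamma(\cdot\times[0,t])]=t\mu_1^E$ supplied by the corrected form of Theorem~\ref{thm:stationary-existence}: the uncorrected inequality $\EE[\Gamma(\cdot\times[0,t])]\le t\mu_1^E$ would, by non-negativity of $h_i$, give only $\EE[\int h_i\,d\widetilde\Gamma]\le t\int h_i\,d\mu_1$, hence only that (\ref{meas-lta-budget-constrs}) implies (\ref{lta-budget-constrs}); the converse direction needs the matching lower bound, which is exactly what the equality provides. Thus the correction to the proof of Theorem~\ref{thm:stationary-existence} is precisely what makes the budget cost exactly linear in $t$ and the equivalence bidirectional.
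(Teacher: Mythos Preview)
Your argument is correct and is exactly the approach the paper takes; the paper's own proof is a one-sentence sketch invoking stationarity of $X$, the identity $\EE[\Gamma(\cdot\times[0,t])]=t\mu_1^E(\cdot)$, and the disintegrations $\eta_0,\eta_1$, and you have simply written out those steps in full. Your closing remark on why the \emph{equality} in the corrected Theorem~\ref{thm:stationary-existence} is what makes the equivalence two-sided is a worthwhile addition not made explicit in the paper.
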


\begin{proof}
This immediately follows from the stationarity of $(X,\Gamma)$ with $X(t)$ having distribution $\mu_0^E$ and $\EE[\Gamma(\cdot\times [0,t])] = t\mu_1^E(\cdot)$ and the use of $\eta_0$ and $\eta_1$ in defining the relaxed controls.
\end{proof}

\subsection{Preliminary technical results} \label{subsect:technical}
We now add a technical lemma concerning random measures having stationary increments which will be used several times in the sequel and in the companion paper. 

\begin{lem} \label{statmeas1}
Let $\Psi$ be a random measure on $E \times [0,\infty)$ such that $\Psi$ has stationary increments,
$\Psi(E \times [0,t])$ is finite for each $t$ and $\EE(\Psi(\cdot \times [0,t]) = t \mu(\cdot)$
for some $\mu \in {\cal M}(E)$.  Let $h$ be a bounded, continuous function on $E$.
Let $\{\tau_k\}$ be a sequence of random variables such that,
defining $k^t_0 = \max\{k: \tau_k < t\}$, $k^t_1 = \min\{k: \tau_k \geq t\}$ and $k^t_2 = k^t_1+1$, 
$$(\tau_{k^t_1}-\tau_{k^t_0})^{-1} \int_{E \times [\tau_{k^t_1},\tau_{k^t_2})} h(x) \Psi(dx \times ds)$$
is stationary as a process in $t$ and for each
$k$, $\EE[\int_{E \times [\tau_k,\tau_{k+1})}h(x)\, \Psi(dx \times ds)] < \infty$.  Then
$$\EE\left[(\tau_{k^t_1}-\tau_{k^t_0})^{-1} \int_{E \times [\tau_{k^t_1},\tau_{k^t_2})} h(x) \Psi(dx \times ds)\right]
= \int h(x)\, \mu(dx).$$
\end{lem}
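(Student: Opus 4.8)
The plan is to exploit the weight $(\tau_{k^t_1}-\tau_{k^t_0})^{-1}$, which is precisely the reciprocal of the length of the renewal interval containing $t$, so that integrating the stationary process against Lebesgue measure over one interval recovers the $\Psi$-mass of the \emph{next} interval with the length weighting stripped off. Write $W(a,b)=\int_{E\times[a,b)}h(x)\,\Psi(dx\times ds)$ and $Z(t)=(\tau_{k^t_1}-\tau_{k^t_0})^{-1}W(\tau_{k^t_1},\tau_{k^t_2})$. First I would reduce to the case $h\geq 0$: since $W$ and hence $Z$ are linear in $h$ and $\int h\,d\mu$ is linear in $h$, and $h=(h+\|h\|_\infty)-\|h\|_\infty$ expresses $h$ as a difference of two nonnegative bounded continuous functions, it suffices to treat $h\geq 0$, where $Z\geq 0$ and Tonelli's theorem applies without any separate integrability check. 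The only distributional input I would use is that $\EE[\Psi(\cdot\times[0,t])]=t\mu$ yields $\EE[W(0,T)]=T\int h\,d\mu$ for every $T$, the point mass of $\Psi$ at the endpoint having zero expectation since $t\mapsto t\mu$ is continuous.

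The heart of the argument is a pathwise telescoping identity. On each interval $t\in(\tau_j,\tau_{j+1}]$ the indices satisfy $k^t_0=j$, $k^t_1=j+1$, $k^t_2=j+2$, so $Z$ is constant there with value $(\tau_{j+1}-\tau_j)^{-1}W(\tau_{j+1},\tau_{j+2})$, and integrating over this interval of length $\tau_{j+1}-\tau_j$ returns exactly $W(\tau_{j+1},\tau_{j+2})$. Letting $\tau_a$ be the first renewal time in $[0,T]$ and $\tau_b$ the last, summing over the enclosed intervals telescopes because the reward intervals $[\tau_{j+1},\tau_{j+2})$ are adjacent, giving $\int_{\tau_a}^{\tau_b}Z(t)\,dt=W(\tau_{a+1},\tau_{b+1})$. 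Hence
$$\int_0^T Z(t)\,dt = B_0 + W(\tau_{a+1},\tau_{b+1}) + B_T,$$
where $B_0=\int_0^{\tau_a}Z\,dt$ and $B_T=\int_{\tau_b}^{T}Z\,dt$ are partial contributions from the renewal intervals straddling $0$ and $T$; each is bounded by a single cycle reward, $B_0\leq W(\tau_a,\tau_{a+1})$ and $B_T\leq W(\tau_{b+1},\tau_{b+2})$, since the fraction of the interval over which $Z$ is integrated is at most one.

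I would then take expectations. By stationarity of $Z$ and Tonelli, $\EE[\int_0^T Z\,dt]=\int_0^T\EE[Z(t)]\,dt=cT$ with $c:=\EE[Z(0)]\in[0,\infty]$. For the right-hand side, since $h\geq 0$, $\tau_{a+1}\geq 0$, and $\tau_{b+1}>T$, the bulk term is sandwiched as $W(0,T)-W(0,\tau_{a+1})\leq W(\tau_{a+1},\tau_{b+1})\leq W(0,T)+W(T,\tau_{b+1})$. Taking expectations, using $\EE[W(0,T)]=T\int h\,d\mu$, dividing by $T$, and letting $T\to\infty$, the lower bound immediately gives $c\geq\int h\,d\mu$, because $W(0,\tau_{a+1})$ is a fixed, $T$-independent quantity with finite expectation (by the per-cycle integrability hypothesis) and hence $o(T)$.

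The main obstacle is the reverse inequality $c\leq\int h\,d\mu$, which requires showing that the near-$T$ contributions $\EE[B_T]$ and $\EE[W(T,\tau_{b+1})]$ are $o(T)$. These are rewards over the renewal interval containing $T$, evaluated at the random index $b=k^T_0\to\infty$, so the bare per-cycle finiteness does not bound them uniformly. Here I would lean on the stationarity of $Z$ itself: because $Z$ is piecewise constant and (generically) jumps at the $\tau_j$, the residual integral $B_T=\int_{\tau_{k^T_0}}^{T}Z\,dt$ and the tail reward $W(T,\tau_{b+1})\leq(\tau_{b+1}-\tau_b)Z(T)$ are functionals of the $Z$-trajectory on an interval ending at $T$, so by stationarity their laws, and in particular their expectations, do not depend on $T$; being finite and $T$-independent they are $o(T)$. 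Making this reading-off of the renewal times from the path of $Z$ fully rigorous, together with the accompanying integrability, is the delicate step. Dividing by $T$ and passing to the limit then gives $c\leq\int h\,d\mu$, and combined with the lower bound this yields $c=\int h\,d\mu$, as claimed.
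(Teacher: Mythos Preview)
Your approach is essentially the paper's: average $\EE[Z(t)]$ over $[0,T]$ by stationarity, decompose $\int_0^T Z(t)\,dt$ via the piecewise-constant cycle structure into a bulk term with expectation exactly $T\int h\,d\mu$ plus boundary contributions from the partial cycles straddling $0$ and $T$, then divide by $T$ and let $T\to\infty$. Your reduction to $h\geq 0$ and the accompanying one-sided sandwich are cosmetic variations; the paper writes an exact decomposition into four boundary terms and simply asserts that ``as $T\to\infty$, the other terms converge to $0$'', so on the point you flag as delicate the paper provides no additional argument.
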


\begin{proof}
Assume for simplicity of notation that when $t=0$, $k^t_0 = 0$, $k^t_1 = 1$ and $k^t_2 = 2$.  For $t \geq 0$,
let $N(t)$ denote the number of $\tau_k$s taking values in $[0,t]$.  %(Note that the notational assumption implies
%$N(t) = k^t_0$.)  
By stationarity,
\begin{eqnarray*}
\lefteqn{\EE\left[(\tau_{k^t_1}-\tau_{k^t_0})^{-1} \int_{E \times [\tau_{k^t_1},\tau_{k^t_2})} h(x) \Psi(dx \times ds)\right]}
\\
&=& T^{-1} \int_0^T 
\EE\left[(\tau_{k^t_1}-\tau_{k^t_0})^{-1} \int_{E \times [\tau_{k^t_1},\tau_{k^t_2})} h(x) \Psi(dx \times ds)\right]\, dt \\
&=& T^{-1} \EE\left[\sum_{k=1}^{N(T)} \frac{(\tau_{k+1} \wedge T) - (\tau_k \vee 0)}{\tau_{k+1}-\tau_k}
\int_{E\times [\tau_{k+1},\tau_{k+2})} h(x)\, \Psi(dx \times ds) \right] \\
&=& T^{-1} \EE\left[\int_{E\times [0,T)} h(x)\, \Psi(dx \times ds) \right] \\
& & - T^{-1} 
\EE\left[\left(1 - \frac{\tau_1 \wedge T}{\tau_1-\tau_0}\right)\int_{E\times [0,\tau_1\wedge T)} h(x)\, \Psi(dx \times ds) 
\right] \\
& & + T^{-1} \EE\left[I_{\{N(T)=1\}}\, \frac{\tau_1}{\tau_1-\tau_0}\int_{E\times [T,\tau_2)} h(x)\, \Psi(dx \times ds) 
\right] \\
& & + T^{-1} \EE\left[I_{\{N(T)\geq 2\}} \int_{E\times [T,\tau_{N(T)+1})} h(x)\, \Psi(dx \times ds) 
\right] \\
& & + T^{-1} \EE\left[I_{\{N(T) \geq 1\}} \left(\frac{T - \tau_{N(T)}}{\tau_{N(T)+1}-\tau_{N(T)}}\right)
\int_{E \times [\tau_{N(T)+1},\tau_{N(T)+2})} h(x)\, \Psi(dx \times ds)\right].
\end{eqnarray*}
The first term of the right-hand-side equals $\int h(x)\, \mu(dx)$ and, as $T\rightarrow \infty$, the other terms
converge to $0$.
\end{proof}

We conclude this section with a final proposition concerning the existence of limits of feasible pairs $\{(\mu_0^n,\mu_1^n): n \in \NN\}$ for the adjoint relation (\ref{id.1}) and budget constraints (\ref{meas-lta-budget-constrs}).  

\begin{prop} \label{feasible-limits}
Assume Conditions~\ref{gencnd} and \ref{costcnd1} hold.  For each $n\in \NN$, suppose that $(\mu_0^n,\mu_1^n) \in {\cal P}(E\times U) \times {\cal M}(E\times U)$ satisfies (\ref{muadmis0}), (\ref{integrability}), (\ref{id.1}) and (\ref{meas-lta-budget-constrs}), with 
\begin{equation} \label{cost-limsup-finite}
\limsup_{n\rightarrow \infty} \left(\int c_0\, d\mu_0^n + \int c_1\, d\mu_1^n\right) = C < \infty.
\end{equation} 
Then there exists a pair $(\mu_0,\mu_1) \in  {\cal P}(E\times U) \times {\cal M}(E\times U)$ satisfying the adjoint relation (\ref{id.1}) and the budget constraints (\ref{meas-lta-budget-constrs}) for which 
$$\int c_0\, d\mu_0 + \int c_1\, d\mu_1 \leq C.$$
\end{prop}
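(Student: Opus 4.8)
The plan is to obtain $(\mu_0,\mu_1)$ as a weak limit of the feasible sequence and to pass each defining relation to the limit, drawing compactness from the inf-compactness of the costs and one-sided stability of the inequality constraints from lower semicontinuity. First I would secure a uniform mass bound on the singular measures. Since each $\mu_0^n\in\mathcal{P}(E\times U)$ already has unit mass, only $\mu_1^n$ needs attention: by Condition~\ref{costcnd1}(c) either $c_1$ or some $h_i$ is bounded below by a constant $\varepsilon>0$, so that either (\ref{cost-limsup-finite}) or the budget bound (\ref{meas-lta-budget-constrs}) together with $g_i\ge 0$ yields $\sup_n\mu_1^n(E\times U)\le M<\infty$. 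Next, the inf-compactness of $c_0$ and $c_1$ (Condition~\ref{costcnd1}(a)) gives tightness: Markov's inequality bounds $\mu_0^n(\{c_0>a\})$ and $\mu_1^n(\{c_1>a\})$ by $a^{-1}\!\int c_i\,d\mu_i^n$, uniformly small for large $a$, while each sublevel set $\{c_i\le a\}$ is compact. By Prohorov's theorem I extract a subsequence (not relabeled) with $\mu_0^n\to\mu_0$ and $\mu_1^n\to\mu_1$ weakly, where $\mu_0\in\mathcal{P}(E\times U)$ and $\mu_1(E\times U)\le M$; closedness of $\mathcal{U}$ with the portmanteau theorem gives $\mu_0(\mathcal{U})=1$ and $\mu_1(\mathcal{U})=\mu_1(E\times U)$, recovering (\ref{muadmis0}).

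The two one-sided relations then pass to the limit immediately. As $g_i,h_i,c_0,c_1$ are lower semicontinuous and bounded below, portmanteau gives
\[
\int g_i\,d\mu_0+\int h_i\,d\mu_1\le\liminf_n\Big(\int g_i\,d\mu_0^n+\int h_i\,d\mu_1^n\Big)\le K_i,
\]
so (\ref{meas-lta-budget-constrs}) holds, and likewise $\int c_0\,d\mu_0+\int c_1\,d\mu_1\le\liminf_n(\int c_0\,d\mu_0^n+\int c_1\,d\mu_1^n)\le C$. This cost bound, combined with Condition~\ref{costcnd1}(b), also recovers (\ref{integrability}).

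The delicate relation is the adjoint identity (\ref{id.1}), whose integrands $Af,Bf$ are continuous but possibly unbounded, so weak convergence alone is insufficient. For the absolutely continuous part I would prove uniform integrability of $\{Af\}$ against $\{\mu_0^n\}$: from $|Af|\le a_f\psi_A\le a_f(a_0c_0^\beta+b_0)$ and $\beta<1$, the estimate $\int_{\{c_0>a\}}c_0^\beta\,d\mu_0^n\le a^{\beta-1}\!\int c_0\,d\mu_0^n$ tends to $0$ uniformly in $n$, so $\{c_0^\beta\}$ and hence $\{Af\}$ are uniformly integrable; with weak convergence and continuity this gives $\int Af\,d\mu_0^n\to\int Af\,d\mu_0$. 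For the singular part I split by Condition~\ref{costcnd1}(d). If every $Bf$ is bounded, the tight convergence $\mu_1^n\to\mu_1$ yields $\int Bf\,d\mu_1^n\to\int Bf\,d\mu_1$ directly. Otherwise I pass to the weighted measures $d\tilde\mu_1^n=\psi_B\,d\mu_1^n$, of bounded total mass $\int\psi_B\,d\mu_1^n\le a_1\!\int c_1\,d\mu_1^n+b_1M$; viewed on the compactification $\overline{E\times U}$ they are relatively weakly compact, so along a further subsequence $\tilde\mu_1^n\to\tilde\mu_1$. Since $Bf/\psi_B$ extends continuously (Condition~\ref{costcnd1}(d)(ii)), $\int Bf\,d\mu_1^n=\int(Bf/\psi_B)\,d\tilde\mu_1^n\to\int\overline{(Bf/\psi_B)}\,d\tilde\mu_1$, while the lower semicontinuity of $c_1/\psi_B$ (Condition~\ref{costcnd1}(d)(i)) keeps the singular cost controlled exactly as above.

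I expect the main obstacle to be this last case. The difficulty is genuine: mass of $\mu_1^n$ can escape along directions where $c_1/\psi_B$ stays bounded (as in the canonical example, where $\mu_1^n$ concentrating at control value $n$ forces $\int Bf\,d\mu_1^n$ to a nonzero limit while $\mu_1^n\to 0$), so $\{Bf\}$ is not uniformly integrable and the honest limit $\tilde\mu_1$ charges the boundary $\partial=\overline{E\times U}\setminus(E\times U)$. Writing $\tilde\mu_1=\psi_B\mu_1+\tilde\mu_1|_{\partial}$, the limiting identity reads $\int Af\,d\mu_0+\int Bf\,d\mu_1=-\int_\partial\overline{(Bf/\psi_B)}\,d\tilde\mu_1$, so (\ref{id.1}) holds on $E\times U$ only after the boundary contribution is accounted for. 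The crux is therefore to interpret $\overline{(Bf/\psi_B)}\,\tilde\mu_1|_\partial$ as an admissible singular (impulse) part of the compactified generator and to carry the feasible pair on $\overline{E\times U}$; here I would rely on $c_1/\psi_B$ being bounded away from $0$ on $\partial$ (since $c_1$ is inf-compact and $\psi_B\le a_1c_1+b_1$), which charges any boundary mass in the cost and hence preserves the bound $\le C$. Making this identification precise, so that the produced pair is feasible for the adjoint relation in the intended sense while retaining the cost bound, is the heart of the argument.
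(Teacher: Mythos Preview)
Your approach matches the paper's proof closely: tightness from inf-compactness of $c_0,c_1$, the mass bound on $\mu_1^n$ from Condition~\ref{costcnd1}(c), lower semicontinuity (the paper phrases this as Skorohod representation plus Fatou) for the cost and budget inequalities, and the same two-case split via Condition~\ref{costcnd1}(d) for the adjoint relation. For the $A$-integral the paper argues tightness of the weighted measures $\psi_A\,d\mu_0^n$ rather than uniform integrability of $Af$, but the two are equivalent and both rest on $\beta<1$.

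The concern you flag in the unbounded-$Bf$ case is exactly the one the paper encounters, and its resolution is precisely the one you gesture at---but carried out without apology: the paper simply lets $\hat\mu_1$ (and hence, implicitly, $\mu_1$) live on the compactification $\overline{E\times U}$ and records that ``more precisely, the adjoint relation under this second option is
\[
\int_{E\times U} Af\,d\mu_0 + \int_{\overline{E\times U}} Bf\,d\mu_1 = 0,\qquad\forall f\in\mathcal{D}.\text{''}
\]
No attempt is made to push the boundary mass back into $E\times U$ or to reinterpret it as an impulse; the proposition's conclusion is understood in this extended sense when Condition~\ref{costcnd1}(d)(i)--(ii) is in force, and the lower semicontinuity of $c_1/\psi_B$ on $\overline{E\times U}$ ensures the cost bound survives. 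So you have correctly located the only delicate point, but you need not work as hard as you suggest: accept the compactified adjoint relation as the output, and the argument closes.
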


\begin{proof}
We first show that (\ref{cost-limsup-finite}) implies the tightness of the measures $\{(\mu_0^n,\mu_1^n): n\in \NN\}$.  Let $\epsilon > 0$ be chosen arbitrarily and pick $M > (C+1)/\epsilon$.  Recall, both $c_0$ and $c_1$ are non-negative.  Define the compact set $K=\{(x,u)\in E\times U: c_0(x,u) \vee c_1(x,u) \leq M\}$.  Let $N$ be large enough such that for all $n \geq N$, 
\begin{equation} \label{n-large}
\int c_0(x,u)\, \mu_0^n(dx\times du) + \int c_1(x,u)\, \mu_1^n(dx\times du) \leq C + 1.
\end{equation}
Then for each $n \geq N$,
\begin{equation} \label{lta-tightness}
\begin{array}{rcl} 
\mu_0^n(K^c) + \mu_1^n(K^c) &\leq& \displaystyle \int_{K^c} \frac{c_0(x,u)}{M}\, \mu_0^n(dx\times du) + \int_{K^c} \frac{c_1(x,u)}{M}\, \mu_1^n(dx\times du) \\
&\leq& \displaystyle \mbox{$\frac{1}{M}$}\left(\int c_0(x,u)\, \mu_0^n(dx\times du) + \int c_0(x,u)\, \mu_0^n(dx\times du) \right) \\
&\leq& \mbox{$\frac{C+1}{M}$} < \epsilon.
\end{array}
\end{equation}
For the finitely many $n$ for which $n < N$, one can find compact sets $K_n$ such that $\mu_0^n(K_n^c) + \mu_1^n(K_n^c) < \epsilon$ and hence taking the union of these compact sets establishes the tightness of both $\{\mu_0^n\}$ and $\{\mu_1^n\}$.  

Let $(\mu_0,\mu_1)$ be a limit of $\{(\mu_0^n,\mu_1^n)\}$.  If $c_1$ satisfies Condition~\ref{costcnd1}(c) with lower bound $a > 0$, then for $n \geq N$, $\mu_1^n(E\times U) \leq \frac{C+1}{a}$ which implies $\mu_1 \in {\cal M}(E\times U)$.  Similarly, if $h_i$ satisfies Condition~\ref{costcnd1}(c) with positive lower bound $a$, then $\mu_1^n(E\times U) \leq \frac{K_i}{a}$, again establishing that $\mu_1$ is a finite measure.

We further claim that $(\mu_0,\mu_1)$ satisfies the adjoint relation (\ref{id.1}).  Let $\{n_k\}$ be a subsequence such that $(\mu_0^{n_k},\mu_1^{n_k})\Rightarrow (\mu_0,\mu_1)$ as $k\rightarrow \infty$.  Arbitrarily pick $f \in {\cal D}$ and observe that for each $k$, 
$$\int Af(x,u)\, \mu_0^{n_k}(dx\times du) + \int Bf(x,u)\, \mu_1^{n_k}(dx\times du) = 0.$$

Begin by examining the convergence related to the first integral.  By Condition~\ref{gencnd}(ii), $\frac{|Af|}{\psi_A} \leq a_f$.  Now for each $k$, define the measure $\hat{\mu}_0^{n_k}$ to have Radon-Nikodym derivative $\psi_A$ relative to $\mu_0^{n_k}$.  We show that $\{\hat{\mu}_0^{n_k}: k\in \NN\}$ is relatively compact.  Choose $\epsilon > 0$ arbitrarily, pick $M$ such that $M^{1-\beta} > \frac{(a_0+b_0)(C+1)}{\epsilon} \vee 1$ and define the compact set $K_1 = \{(x,u)\in E\times U: c_0(x,u) \leq M\}$.  By Condition~\ref{costcnd1}(c), on the set $K_1^c$, 
$$\psi_A \leq a_0 c_0^\beta + b_0 = \mbox{$\frac{c_0}{M^{1-\beta}} \cdot \left(a_0 + \frac{b_0}{c^\beta}\right) \cdot \frac{M^{1-\beta}}{c_0^{1-\beta}} < \frac{a_0+b_0}{M^{1-\beta}}\; c_0.$}$$
Thus, letting $N$ be sufficiently large that (\ref{n-large}) holds for $n_k \geq N$, it follows that for such $n_k$, 
\begin{eqnarray*}
\hat{\mu}_0^{n_k}(K^c) = \int_{K^c} \psi_A(x,u)\, \mu_0^{n_k}(dx\times du) &\leq& \mbox{$\frac{a_0+b_0}{M^{1-\beta}}$} \int c_0(x,u)\, \mu_0^{n_k}(dx\times du) \\ 
&\leq& \mbox{$\frac{(a_0+b_0)(C+1)}{M^{1-\beta}} < \epsilon$}.
\end{eqnarray*}
As a result, $\{\hat{\mu}_0^{n_k}\}$ is tight so there exists some subsequence $\{n_{k_\ell}\}$ and a limiting measure $\hat{\mu}_0$ such that $\hat{\mu}_0^{n_{k_\ell}} \Rightarrow \hat{\mu}_0$ as $k_\ell\rightarrow \infty$.  Note that for any bounded continuous function $h$, the fact that $\psi_A \geq 1$ along with weak convergence implies
\begin{equation} \label{separating}
\int h(x,u)\, \mu_0(dx\times du) = \int \frac{h(x,u)}{\psi_A(x,u)}\, \hat{\mu}_0(dx\times du)
\end{equation}
and hence $\hat{\mu}_0$ has Radon-Nikodym derivative $\psi_A$ with respect to $\mu_0$.  Since $\frac{Af}{\psi_A}$ is bounded and continuous, the desired convergence also follows.

Now consider the simple case in Condition~\ref{costcnd1}(d) of $Bf$ being bounded.  Then weak convergence immediately yields $\int Bf\, d\mu_1^{n_k} \rightarrow \int Bf\, d\mu_1$ and (\ref{id.1}) follows.  When the second option in Condition~\ref{costcnd1}(d) holds so $Bf/\psi_B$ extends continuously to $\overline{E\times U}$, define the measures $\hat{\mu}_1^{n_k}$ on $\overline{E\times U}$ to have Radon-Nikodym derivative $\psi_B$ with respect to $\mu_1^{n_k}$ for $k \in \NN$.  Clearly $\{\hat{\mu}_1^{n_k}: k \in \NN\}$ is tight and thus relatively compact.  Thus there exists a subsequence of $\{n_{k_\ell}\}$ and limiting measure $\hat{\mu}_1$ so that applying the same argument as for (\ref{separating}), it follows that $\hat{\mu}_1$ has Radon-Nikodym derivative $\psi_B$ with respect to $\mu_1$.  By Condition~\ref{gencnd}(ii) and using the continuous extension of $Bf/\psi_B$, 

$$\lim_{k_\ell \rightarrow \infty} \int Bf\, d\mu_1^{n_{k_\ell}} = \lim_{k_\ell \rightarrow \infty} \int \frac{Bf}{\psi_B}\, d\hat{\mu}^{n_{k_\ell}}_1 = \int \frac{Bf}{\psi_B}\, d\hat{\mu}_1 = \int Bf\, d\mu_1$$
and again (\ref{id.1}) follows.  More precisely, the adjoint relation under this second option is
$$\int_{E\times U} Af(x,u)\, \mu_0(dx\times du) + \int_{\overline{E\times U}} Bf(x,u)\, \mu_1(dx\times du) = 0, \qquad \forall f \in {\cal D}.$$

Turning to an analysis of the cost, applications of the Skorohod representation theorem and Fatou's lemma implies 
$$\int c_0\, d\mu_0 + \int c_1\, d\mu_1 \leq \liminf_{k_\ell \rightarrow \infty} \left(\int c_0\, d\mu_0^{n_{k_\ell}} + \int c_1\, d\mu_1^{n_{k_\ell}}\right) \leq C$$
and for each $i = 1,\ldots,m$,
$$\int g_i\, d\mu_0 + \int h_i\, d\mu_1 \leq \liminf_{k_\ell \rightarrow \infty} \left(\int g_i\, d\mu_0^{n_{k_\ell}} + \int h+i\, d\mu_1^{n_{k_\ell}}\right) \leq K_i.$$
\end{proof}

\setcounter{equation}{0}

\section{Long-term average control problem.} \label{seclta}
Theorem~\ref{thm:stationary-existence} shows existence of a stationary process $X$ and random measure $\Gamma$ having stationary increments corresponding to measures $\mu_0$ and $\mu_1$ satisfying (\ref{id.1}) while Corollary~\ref{cor:lta-budget-constrs-meas} characterizes the budget constraints.  These results can be used directly in establishing an equivalent linear program for the long-term average control problem.  

\begin{thm} \label{thm:lta-lp}
Assume Conditions~\ref{gencnd} and \ref{costcnd1} hold.  Then the problem of minimizing the long-term average cost $J_0(\Lambda,\Gamma)$ in (\ref{ltacost}) over relaxed solutions $(X,\Lambda,\Gamma)$ of the singular, controlled martingale problem for $(A,B)$ that satisfy the budget constraints (\ref{lta-budget-constrs}) is equivalent to the linear program 
\begin{equation} \label{lta-lp}
\begin{array}{ll}
\mbox{Minimize} & \displaystyle \int_{E\times U} c_0(x,u)\, \mu_0(dx\times du) + \int_{E\times U} c_1(x,u)\, \mu_1(dx\times du) \rule[-15pt]{0pt}{15pt} \\
\mbox{Subject to} & \displaystyle \int_{E\times U} Af(x,u)\, \mu_0(dx\times du) + \int_{E\times U} Bf(x,u)\, \mu_1(dx\times du) = 0, \\
& \hfill \quad \forall\, f \in {\cal D}, \\
& \displaystyle \int_{E\times U} g_i(x,u)\, \mu_0(dx\times du) + \int_{E\times U} h_i(x,u)\, \mu_1(dx\times du) \leq K_i, \\
& \hfill \quad i=1,\ldots,m,\\
& \mu_0 \in {\cal P}(E\times U), \mu_1 \in {\cal M}(E\times U).
\end{array} 
\end{equation}
%Let $c^*$ denote the optimal value of the linear program (\ref{lta-lp}).  Suppose further that there exists a sequence $\{(\mu_0^n,\mu_1^n)\}$ of feasible solutions of (\ref{lta-lp}), with $\{\mu_1^n\}$ satisfying Condition~\ref{mu1-bounded}, such that $\lim_{n\rightarrow \infty} (\int c_0\, d\mu_0^n + \int c_1\, d\mu_1^n) = c^*$.  Then 
Moreover, there exists an optimal pair $(\mu_0^*,\mu_1^*)$.  Letting $\eta_0^*$ and $\eta_1^*$ be the transition functions defined by (\ref{etadefs}), an optimal absolutely continuous relaxed control is given in feedback form by $\{\Lambda^*_t=\eta_0^*(X^*(t),\cdot): t \geq 0\}$, where $X^*$ is the stationary process of Theorem~\ref{thm:stationary-existence} having one-dimensional distribution $\mu_1^{*E}$, and $\eta_1^*(x,\cdot)$ is an optimal relaxed singular control that is activated by the random measure $\Gamma^*$.
\end{thm}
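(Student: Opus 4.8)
The plan is to prove the asserted equivalence by showing that the infimum of the linear program (\ref{lta-lp}), call it $V_{\mathrm{LP}}$, coincides with the infimum $V_{\mathrm{ctrl}}$ of $J_0(\Lambda,\Gamma)$ over admissible relaxed solutions satisfying (\ref{lta-budget-constrs}); this I would do through two complementary constructions, after which the optimal pair is extracted from a minimizing sequence and the feedback controls are read off by disintegration.

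The inequality $V_{\mathrm{ctrl}}\le V_{\mathrm{LP}}$ follows directly from the existence machinery already in place. Given any $(\mu_0,\mu_1)$ feasible for (\ref{lta-lp}), condition (\ref{muadmis0}) holds by feasibility and (\ref{integrability}) holds by \cndref{costcnd1}(b) together with finiteness of the objective; hence \thmref{thm:stationary-existence} supplies a stationary $X$ and a random measure $\Gamma$ with stationary increments, and the transition functions $\eta_0,\eta_1$ of (\ref{etadefs}) yield an admissible solution $(X,\Lambda,\widetilde\Gamma)$. By \corref{cor:lta-budget-constrs-meas} this solution satisfies the budget constraints (\ref{lta-budget-constrs}) precisely because $(\mu_0,\mu_1)$ satisfies (\ref{meas-lta-budget-constrs}). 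Stationarity makes the time average $t^{-1}\EE[\cdots]$ in (\ref{ltacost}) independent of $t$, so the $\limsup$ is a genuine limit and $J_0(\Lambda,\Gamma)=\int c_0\,d\mu_0+\int c_1\,d\mu_1$; taking the infimum over feasible pairs gives $V_{\mathrm{ctrl}}\le V_{\mathrm{LP}}$.

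The reverse inequality is the crux and proceeds through expected occupation measures. For an admissible $(X,\Lambda,\Gamma)$ with $J_0(\Lambda,\Gamma)<\infty$, set $\mu_0^t(G)=t^{-1}\EE[\int_0^t\int_U I_G(X(s),u)\,\Lambda_s(du)\,ds]$ and $\mu_1^t(G)=t^{-1}\EE[\Gamma(G\times[0,t])]$ on $E\times U$. Taking expectations in the martingale (\ref{mgp}), which has mean zero since it vanishes at $t=0$, and dividing by $t$ gives, for each $f\in{\cal D}$,
\[
\int Af\,d\mu_0^t+\int Bf\,d\mu_1^t=t^{-1}\bigl(\EE[f(X(t))]-\EE[f(X(0))]\bigr),
\]
whose right side tends to $0$ as $t\to\infty$ because $f$ is bounded. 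Selecting $t_n\to\infty$ realizing the $\liminf$ of the time average in (\ref{ltacost}) (which is at most $J_0(\Lambda,\Gamma)$), the inf-compactness of $c_0,c_1$ in \cndref{costcnd1}(a) and the resulting cost bound yield tightness of $\{\mu_0^{t_n}\}$ and $\{\mu_1^{t_n}\}$ exactly as in \propref{feasible-limits}; pass to weak limits $(\mu_0,\mu_1)$. Admissibility (\ref{lamadmis})--(\ref{gamadmis}) makes each $\mu_i^{t_n}$ charge only the closed set ${\cal U}$, so the limits inherit (\ref{muadmis0}). Passing to the limit in the displayed identity produces the adjoint relation (\ref{id.1}). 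This passage is the main obstacle: $Af$ and $Bf$ are only $\psi_A$- and $\psi_B$-dominated rather than bounded, so one must reweight by $\psi_A,\psi_B$ and invoke \cndref{costcnd1}(b),(d), reproducing the Radon--Nikodym tightness argument of \propref{feasible-limits} verbatim. Lower semicontinuity of $g_i,h_i$ delivers (\ref{meas-lta-budget-constrs}), while lower semicontinuity of $c_0,c_1$ with Fatou's lemma gives $\int c_0\,d\mu_0+\int c_1\,d\mu_1\le\liminf_n(\int c_0\,d\mu_0^{t_n}+\int c_1\,d\mu_1^{t_n})\le J_0(\Lambda,\Gamma)$. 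Thus $V_{\mathrm{LP}}\le V_{\mathrm{ctrl}}$, and the two problems are equivalent.

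Finally, to exhibit an optimal pair, take a minimizing sequence $(\mu_0^n,\mu_1^n)$ for (\ref{lta-lp}) with objective values decreasing to $V_{\mathrm{LP}}=V_{\mathrm{ctrl}}<\infty$; \propref{feasible-limits} returns a feasible limit $(\mu_0^*,\mu_1^*)$ whose objective is at most $V_{\mathrm{LP}}$, hence optimal. Disintegrating $(\mu_0^*,\mu_1^*)$ through (\ref{etadefs}) defines $\eta_0^*,\eta_1^*$, and feeding these back into the construction underlying $V_{\mathrm{ctrl}}\le V_{\mathrm{LP}}$ produces the stationary process $X^*$, the feedback control $\Lambda_t^*=\eta_0^*(X^*(t),\cdot)$, and the relaxed singular control $\eta_1^*$ activated by $\Gamma^*$, which together form an optimal admissible solution of the stated feedback form.
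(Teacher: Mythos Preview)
Your proposal is correct and follows essentially the same approach as the paper's proof: both directions of the equivalence are obtained exactly as you describe, via the expected occupation measures $\mu_0^t,\mu_1^t$ and the limit argument borrowed from \propref{feasible-limits} for $V_{\mathrm{LP}}\le V_{\mathrm{ctrl}}$, and via \thmref{thm:stationary-existence} with \corref{cor:lta-budget-constrs-meas} for $V_{\mathrm{ctrl}}\le V_{\mathrm{LP}}$, after which the optimal pair is extracted from a minimizing sequence by \propref{feasible-limits}. The only cosmetic difference is that the paper takes an arbitrary sequence $t_k\to\infty$ (relying on $\limsup_k a_{t_k}\le\limsup_t a_t$) rather than one realizing the $\liminf$, and invokes the Skorohod representation theorem before applying Fatou; neither affects the argument.
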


\begin{proof}
First observe that if $J_0(\Lambda,\Gamma)$ is infinite for every relaxed solution of the singular, controlled martingale problem, then the minimal value of the linear program (\ref{lta-lp}) is also infinite.  For if not, then for some feasible pair $(\mu_0,\mu_1)$, Theorem~\ref{thm:stationary-existence} gives the existence of a stationary relaxed solution for which the long-term average cost is given by $\int c_0\, d\mu_0 + \int c_1\, d\mu_1$ resulting in a contradiction.  In this case, every solution is optimal (but not desired). 
 
Now let $(X,\Lambda,\Gamma)$ be a relaxed solution of the singular, controlled martingale problem for $(A,B)$ for which $J_0(\Lambda,\Gamma)<\infty$.  For $t > 0$, define measures $\mu_0^t$ and $\mu_1^t$ by
$$\begin{array}{rcll}
\mu_0^t(G) &=& \displaystyle t^{-1} \EE\left[\int_0^t \int_U I_{G}(X(s),u)\, \Lambda_s(du)\, ds\right], \rule[-15pt]{0pt}{15pt} & \quad G \in {\cal B}(E\times U), \\
\mu_1^t(G) &=& \displaystyle t^{-1} \EE\left[\int_{E\times U\times [0,t]} I_{G}(x,u)\, \Gamma(dx\times du \times ds)\right],& \quad G \in {\cal B}(E\times U).
\end{array}$$
Let $\{t_k:k\in \NN\}$ be any sequence of times with $t_k \rightarrow \infty$ as $k\rightarrow \infty$.  Using the same argument involving (\ref{lta-tightness}), the inf-compactness of $c_0$ and $c_1$ in Condition~\ref{costcnd1}(a) imply that $\{\mu_0^{t_k}: k\in \NN\}$ and $\{\mu_1^{t_k}: k\in\NN\}$ are relatively compact.  

Let $(\mu_0,\mu_1)$ be a limit point of $\{(\mu_0^{t_k},\mu_1^{t_k})\}$ for some subsequence $\{t_k\}$ having $t_k\rightarrow\infty$.  Consider $f \in {\cal D}$ and observe that since (\ref{mgp}) is a martingale, taking expectations and dividing by $t_k$ yields for each $k$, 
$$\EE\left[\mbox{$\frac{f(X(t_k))-f(X(0))}{t_k}$}\right] - \int Af(x,u)\, \mu_0^{t_k}(dx\times du) - \int Bf(x,u)\, \mu_1^{t_k}(dx\times du) = 0.$$
Letting $k \rightarrow \infty$, the first summand converges to $0$ since $f$ is bounded.  The convergence of the remaining terms to the adjoint relation of the main constraint of linear program (\ref{lta-lp}) follows using the same argument as in the proof of Proposition~\ref{feasible-limits}.  

Turning to an analysis of the cost and budget constraints, applications of the Skorohod representation theorem and Fatou's lemma again imply 
$$\int c_0\, d\mu_0 + \int c_1\, d\mu_1 \leq \liminf_{k_\ell \rightarrow \infty} \left(\int c_0\, d\mu_0^{t_{k_\ell}} + \int c_1\, d\mu_1^{t_{k_\ell}}\right) \leq J_0(\Lambda,\Gamma)$$
and similarly for each $i=1,\ldots,m$,
$$\int g_i\, d\mu_0 + \int h_i\, d\mu_1 \leq \liminf_{k_\ell \rightarrow \infty} \left(\int g_i\, d\mu_0^{t_{k_\ell}} + \int ch_i\, d\mu_1^{t_{k_\ell}}\right) \leq K_i.$$
Thus, the minimal cost of the linear program (\ref{lta-lp}) is a lower bound for the (\ref{ltacost}) over all relaxed solutions $(X,\Lambda,\Gamma)$ of the singular, controlled martingale problem for $(A,B)$ satisfying the budget constraints (\ref{lta-budget-constrs}).

Conversely, for every feasible pair of measures $(\mu_0,\mu_1)$ for which $\int c_0\, d\mu_0 + \int c_1\, d\mu_1 < \infty$, Theorem~\ref{thm:stationary-existence} establishes the existence of a stationary process $X$ and random measure $\Gamma$ having stationary increments such that, defining $\Lambda_t(\cdot) = \eta_0(X(t),\cdot)$ and $\tilde{\Gamma}(dx\times du\times ds) = \eta_1(x,du)\Gamma(dx\times ds)$, $(X,\Lambda,\tilde{\Gamma})$ is a relaxed solution of the singular, controlled martingale problem for $(A,B)$ and for which, for every $t>0$,  
\begin{eqnarray*}
\lefteqn{\int c_0\, d\mu_0 + \int c_1\, d\mu_1} \\
&=& t^{-1} \EE\left[\int_0^t \int_U c_0(X(s),u)\, \eta_0(X(s),du)\, ds + \int_{E\times [0,t]} \int_U c_1(x,u)\, \eta_1(x,du)\, \Gamma(dx \times ds)\right] 
\end{eqnarray*} 
and hence is the value of the expected long-term average cost $J_0(\Lambda,\Gamma)$ in (\ref{ltacost}).  Corollary~\ref{cor:lta-budget-constrs-meas} shows that $(X,\Lambda,\tilde\Gamma)$ also satisfies the budget constraints (\ref{lta-budget-constrs}).

It remains to show existence of an optimal pair $(\mu_0^*,\mu_1^*)$ when the optimal cost is finite.  Let $c^*$ denote the value of the linear program (\ref{lta-lp}) and let $\{(\mu_0^n,\mu_1^n): n \in \NN\}$ be a sequence for which
$$\lim_{n\rightarrow \infty} \left(\int c_0(x,u)\, \mu_0^n(dx\times du) + \int c_1(x,u)\, \mu_1(dx\times du)\right) = c^*.$$  
Then Proposition~\ref{feasible-limits} gives the existence of a feasible pair $(\mu_0^*,\mu_1^*)$ for which 
$$\int c_0(x,u)\, \mu_0^*(dx\times du) + \int c_1(x,u)\, \mu_1^*(dx\times du) \leq c^*,$$
establishing the optimality of $(\mu_0^*,\mu_1^*)$.
\end{proof}

\setcounter{equation}{0}

\section{Discounted control problem.} \label{secdisc}
We now turn to the reformulation of the singular control problem under the discounted criterion $J_\alpha(\Lambda,\Gamma)$ of (\ref{disccost}).  The first result gives the existence of solutions to the singular, controlled
martingale problem for $(A,B,\nu_0)$ having a desired cost.

\begin{thm} \label{exis:disc}
Let $A,B,\psi_A, \psi_B$ satisfy Condition \ref{gencnd}.
Suppose $\mu_0$ and $\mu_1$ satisfy (\ref{muadmis0}) and (\ref{integrability}) 
%\begin{eqnarray} \label{comp1}
%\int \psi_A(x,u) \,\mu_0(dx \times du) + \int \psi_B(x,u)\, \mu_1(dx \times du) < \infty
%\end{eqnarray}
and, for each  $f \in {\cal D}$, 
\begin{eqnarray} \label{id1}
\int \left[Af(x,u) + \alpha\left(\int f d\nu_0 - f(x)\right)\right] \mu_0(dx \times du) 
&+& \int Bf(x,u)\, \mu_1(dx \times du) = 0.\;\;\;\; ~
\end{eqnarray}
Let $\mu_i^E$ and $\eta_i$, $i = 0,1$, satisfy (\ref{etadefs}).  Then there exist a process $X$ and
a random measure $\Gamma$ on $E \times [0,\infty)$ such that (\ref{mg2}) is an
$\{\overline{\cal F}^X_{t+}\}$-martingale and 
\begin{eqnarray} \label{costrep} \nonumber
& &
\EE\left[\int_0^\infty \int_U e^{-\alpha s} c_0(X(s),u)\eta_0(X(s),du)\, ds 
+ \int_{E \times[0,\infty)} \int_U e^{-\alpha s} c_1(x,u)\eta_1(x,du)\, \Gamma(dx \times ds)\right] \\
& & \qquad \qquad = \alpha^{-1} \left[ \int c_0(x,u)\, \mu_0(dx \times du) + \int c_1(x,u)\, \mu_1(dx \times du)\right]
\end{eqnarray}
for all $c_0, c_1 \in B(E\times U)$ and for every nonnegative $c_0, c_1 \in M(E\times U)$.
\end{thm}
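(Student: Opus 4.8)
The plan is to reduce the discounted problem to the stationary existence result, Theorem~\ref{thm:stationary-existence}, by absorbing the discount factor into an augmented generator. Define $\tilde A:{\cal D}\to C(E\times U)$ by
$$\tilde A f(x,u) = Af(x,u) + \alpha\left(\int_E f\,d\nu_0 - f(x)\right).$$
With this notation the discounted adjoint relation (\ref{id1}) reads exactly $\int \tilde A f\,d\mu_0 + \int Bf\,d\mu_1 = 0$ for all $f\in {\cal D}$, which is the stationary adjoint relation (\ref{id.1}) for the pair $(\tilde A,B)$. Since $(\mu_0,\mu_1)$ already satisfies (\ref{muadmis0}) and (\ref{integrability}), once $(\tilde A,B)$ is shown to satisfy Condition~\ref{gencnd} we may apply Theorem~\ref{thm:stationary-existence} to $(\tilde A,B,\mu_0,\mu_1)$ to obtain a stationary process $\tilde X$ with one-dimensional distribution $\mu_0^E$ and a random measure $\tilde\Gamma$ with stationary increments and $\EE[\tilde\Gamma(\cdot\times[0,t])]=t\mu_1^E(\cdot)$ solving the $(\tilde A,B)$-martingale problem.

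First I would verify that $(\tilde A,B,\psi_A,\psi_B)$ satisfies Condition~\ref{gencnd}. The added term $\alpha(\int f\,d\nu_0-f(x))$ is continuous in $x$ and constant in $u$, so $\tilde A f\in C(E\times U)$, and $\tilde A 1 = A1 + \alpha(1-1)=0$, giving part (i). Since $\psi_A\ge 1$ we have $|\tilde A f|\le |Af| + 2\alpha\|f\|_\infty \le (a_f+2\alpha\|f\|_\infty)\psi_A$, so part (ii) holds with the same $\psi_A$ and an adjusted constant. Part (iii) transfers because the functional $f\mapsto f(X(t))-f(X(0))-\int_0^t\int_U\tilde A f\,\eta_0\,ds-\cdots$ differs from its $A$-counterpart only by the term $\int_0^t\alpha(\int f\,d\nu_0-f(X(s)))\,ds$, which is linear and continuous in $f$, so the same countable collection $\{f_k\}$ suffices. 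For part (iv), $\tilde A_u = A_u + \alpha(\nu_0-I)$ is the sum of the pre-generator $A_u$ and the bounded jump generator $f\mapsto\alpha\int_E(f(y)-f(\cdot))\,\nu_0(dy)$, hence is again a pre-generator. Part (v) is inherited since ${\cal D}$ is unchanged.

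The heart of the argument is to pass from the stationary $(\tilde A,B)$-solution back to an $(A,B)$-solution carrying the discounted occupation. Because $\tilde A - A = \alpha(\nu_0-I)$ is the generator of restarts to $\nu_0$ at rate $\alpha$, the process $\tilde X$ evolves by the $(A,B)$-dynamics between the restart times, which form a stationary rate-$\alpha$ point process, and jumps to an independent $\nu_0$-sample at each restart. I would identify these restart times, let $\rho_0\le 0<\rho_1$ straddle the origin, let $X$ be the $(A,B)$-solution from $\nu_0$ determined by the life segment beginning at $\rho_0$ and continued as an $(A,B)$-solution on all of $[0,\infty)$, and define $\Gamma$ from $\tilde\Gamma$ in the same way; the martingale property (\ref{mg2}) for $(A,B)$ holds along this segment by the strong-Markov/optional-sampling structure of the construction. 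The occupation identity then follows from the renewal (resolvent) inversion relating the stationary restart law to the $\alpha$-discounted occupation of the $(A,B)$-process killed at an independent lifetime $\zeta\sim\mathrm{Exp}(\alpha)$ of mean $\alpha^{-1}$: for $G\in{\cal B}(E)$,
$$\mu_0^E(G) = \frac{\EE\left[\int_0^\zeta I_G(X(s))\,ds\right]}{\EE[\zeta]} = \alpha\int_0^\infty e^{-\alpha s}\,\Pr(X(s)\in G)\,ds,$$
using $\Pr(\zeta>s)=e^{-\alpha s}$. Incorporating the feedback controls $\eta_0,\eta_1$ and the singular measure yields $\mu_0(G)=\alpha\,\EE[\int_0^\infty e^{-\alpha s}\int_U I_G\,\eta_0\,ds]$ for $G\in{\cal B}(E\times U)$ and the companion identity for $\mu_1$ in terms of $\Gamma$.

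Finally I would assemble (\ref{costrep}). For bounded $c_0,c_1\in B(E\times U)$ the two occupation identities give directly
$$\EE\left[\int_0^\infty\int_U e^{-\alpha s}c_0\,\eta_0\,ds + \int_{E\times[0,\infty)}\int_U e^{-\alpha s}c_1\,\eta_1\,\Gamma\right] = \alpha^{-1}\left[\int c_0\,d\mu_0 + \int c_1\,d\mu_1\right],$$
and the extension to nonnegative $c_0,c_1\in M(E\times U)$ follows by applying the bounded case to the truncations $c_i\wedge n$ and letting $n\to\infty$ by monotone convergence on both sides (the common value possibly being $+\infty$). The main obstacle is the extraction step: rigorously recovering a genuine $(A,B)$-solution on $[0,\infty)$ from the stationary restart process $\tilde X$, that is, identifying the restart times, defining the straddling life segment and its continuation, and verifying the martingale property (\ref{mg2}) for $(A,B)$ with respect to $\{\overline{\cal F}^X_{t+}\}$, together with justifying the renewal inversion identity in the present relaxed setting where the singular behavior is encoded in the random measure $\Gamma$ rather than in the path of $X$.
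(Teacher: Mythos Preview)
Your reduction to the stationary case via $\tilde A = A + \alpha(\nu_0 - I)$ is exactly the paper's starting idea, and your verification of Condition~\ref{gencnd} for $(\tilde A,B,\psi_A,\psi_B)$ is fine. The genuine gap is the extraction step you yourself flag as ``the main obstacle.'' Applying Theorem~\ref{thm:stationary-existence} to $(\tilde A,B)$ yields a stationary $\tilde X$, but the restart times are \emph{not recoverable} from the path of $\tilde X$ alone: a jump to a fresh $\nu_0$-sample need not be distinguishable from any other motion of the process, so your $\rho_0,\rho_1$ are not well-defined stopping times. The paper resolves this by enlarging the state space to $\{-1,+1\}\times E$ and building the generator $A^\alpha[\phi f](\theta,x,u)=\phi(\theta)Af(x,u)+\alpha[\phi(-\theta)\int f\,d\nu_0-\phi(\theta)f(x)]$, so that an auxiliary coordinate $\Theta$ flips sign at each restart; the jump times of $\Theta$ then furnish the restart times $\tau_k$ as honest stopping times adapted to $\{\bar{\cal F}^{\Theta,\tilde X}_{t+}\}$.

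A second issue is your appeal to renewal inversion. The paper explicitly notes that the cycles delimited by the $\tau_k$ need not be independent or identically distributed, so the resolvent identity you write down is not justified. Instead the paper proceeds by a change of measure: setting $X(t)=\tilde X(\tau_1+t)$ and ${\cal F}_t=\bar{\cal F}^{\Theta,\tilde X}_{(\tau_1+t)+}$, it introduces the mean-one $\{{\cal F}_t\}$-martingale $L(t)=[\alpha(\tau_1-\tau_0)]^{-1}e^{\alpha t}I_{[0,\tau_2-\tau_1)}(t)$ and defines $\hat P$ by $d\hat P/dP|_{{\cal F}_t}=L(t)$. Under $\hat P$ the pair $(X,\Gamma)$ solves the $(A,B,\nu_0)$-martingale problem on all of $[0,\infty)$---there is no ``continuation'' to construct, and the $\alpha(\int f\,d\nu_0-f)$ term is removed by the change of measure rather than by truncating at $\tau_2$. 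The identity (\ref{costrep}) then comes not from renewal theory but from unwinding the $\hat P$-expectation back to a $P$-expectation of the form treated in Lemma~\ref{statmeas1}, which requires only stationary increments and handles the absence of i.i.d.\ cycles. Your ``straddling life segment plus continuation'' is not well-defined in this setting, and the renewal identity you invoke would require independence you do not have.
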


\begin{proof}
Enlarge the state space to $\{-1,+1\} \times E \times U$.  Let ${\cal D}_0 = \{ \phi f: \phi \in B\{-1,+1\}, f \in {\cal D}\}$ and define the generators $A^\alpha, B^\alpha: {\cal D}_0 \rightarrow C(\{-1,1\}\times E \times U)$ by
$$A^\alpha[\phi f](\theta,x,u) = \phi(\theta) Af(x,u) 
+ \alpha\left[\phi(-\theta) \int f\, d\nu_0 - \phi(\theta) f(x)\right]$$
and 
$$B^\alpha[\phi f](\theta,x,u) = \phi(\theta) Bf(x,u).$$
Observe that $A^\alpha$ and $B^\alpha$ satisfy Conditions~\ref{gencnd} with $\psi_A^\alpha(\theta,x,u) = \psi_A(x,u)$ and similarly, $\psi^\alpha_B(\theta,x,u) .= \psi_B(x,u)$.  Define measures $\tilde{\mu}_0$ and $\tilde{\mu}_1$ such that for each $h \in \bar{C}(\{-1,+1\} \times E \times U)$
%\begin{eqnarray*}
$$\int h(\theta,x,u)\, \tilde{\mu}_0(d\theta \times dx \times du) = \int \mbox{$\frac{1}{2}$} \left(h(-1,x,u) + h(+1,x,u)\right)\,
\mu_0(dx \times du) %\\
%& & + \int \frac{1}{2} \left(h(-1,x,u) + h(+1,x,u)\right)\,\mu_1(dx \times du) 
$$
%\end{eqnarray*}
and
$$\int h(\theta,x,u)\, \tilde{\mu}_1(d\theta \times dx\times du) 
= \int \mbox{$\frac{1}{2}$} \left(h(-1,x,u) + h(+1,x,u)\right)\, \mu_1(dx \times du).$$
Note that for $i=0,1$, 
$\tilde{\mu}_i(d\theta \times dx \times du) = \eta_i(x,du) \mu_i^E(dx) \cdot
\frac{1}{2}\left(\delta_{\{-1\}}(d\theta) + \delta_{\{+1\}}(d\theta)\right)$ 
so the transition functions $\tilde{\eta}_i(\theta,x,\cdot)$ used to disintegrate $\tilde{\mu}_i$ as in
(\ref{etadefs}) satisfy $\tilde{\eta}_i(\theta,x,\cdot) = \eta_i(x,\cdot)$ and thus only depend on
the value of $x$.

A straightforward calculation using (\ref{id1}) verifies that
\begin{eqnarray*}
\lefteqn{\int_{\{-1,+1\} \times E \times U} A^\alpha[\phi f](\theta,x,u)\, \tilde{\mu}_0(d\theta \times dx \times du)} \\
&\qquad +& \int_{\{-1,+1\} \times E \times U} B^\alpha[\phi f](\theta,x,u)\, \tilde{\mu}_1(d\theta \times dx \times du)
=0, \qquad \forall \phi f \in {\cal D}_0.
\end{eqnarray*}

By Theorem~\ref{thm:stationary-existence} there exist a $\{-1,+1\}\times E$-valued process 
$(\Theta,\tilde{X})$ and a random measure $\tilde{\Gamma}$
on $\{-1,+1\}\times E \times [0,\infty)$ such that
$(\Theta,\tilde{X})$ is stationary with 
$$\EE[I_{\{\theta\} \times H_1}(\Theta(t),\tilde{X}(t)) \eta_0(\tilde{X}(t),H_2)]
= \tilde{\mu}_0(\{\theta\} \times H_1 \times H_2), \quad \theta = \pm 1, H_1\in {\cal B}(E), H_2 \in {\cal B}(U)$$ for
each $t$, $\tilde{\Gamma}$ has stationary increments, $\tilde{\Gamma}(\{-1,+1\}\times E\times [0,t]) < \infty$
for each $t \geq 0$ and 
$\EE\left[\eta_1(x,du) \tilde{\Gamma}(d\theta \times dx \times [0,t])\right] 
= t \mu_1(dx \times du)\cdot \frac{1}{2}\left(\delta_{\{-1\}}(d\theta) + \delta_{\{+1\}}(d\theta)\right)$,
and for each $\phi f \in {\cal D}_0$,
\begin{eqnarray} \label{alphamg} \nonumber
\phi(\Theta(t))f(\tilde{X}(t)) &-& \phi(\Theta(0)) f(\tilde{X}(0))\\
&-& \int_0^t \int_U A^\alpha[\phi f](\Theta(s),\tilde{X}(s),u) \eta_0(\tilde{X}(s),du)\, ds 
\\ \nonumber
&-& \int_{\{-1,+1\}\times E\times [0,t]} \int_U B^\alpha[\phi f](\theta,x,u) \eta_1(x,du) \tilde{\Gamma}(d\theta \times dx \times ds)
\end{eqnarray}
is an $\{\bar{\cal F}_{t+}^{\;\Theta,\tilde{X}}\}$-martingale.

Taking $f \equiv 1$ in (\ref{alphamg}), we have
$$\phi(\Theta(t)) - \phi(\Theta(0)) - \int_0^t \alpha\left(\phi(-\Theta(s)) - \phi(\Theta(s))\right)\, ds$$
is a martingale and hence $\Theta$ is a (stationary) continuous time Markov chain which jumps between states $\{-1\}$ and
$\{+1\}$ at rate $\alpha$.  
Taking $\phi \equiv 1$ in (\ref{alphamg}), it follows that 
\begin{eqnarray} \label{statmg} \nonumber
f(\tilde{X}(t)) - f(\tilde{X}(0)) &-& \int_0^t \int_U \left[Af(\tilde{X}(s),u) + \alpha\left(\int f\, d\nu_0 - f(\tilde{X}(s))\right)\right]
\eta_0(\tilde{X}(s),du)\, ds \\ 
&-& \int_{E \times [0,t]} \int_U Bf(x,u) \eta_1(x,du) \tilde{\Gamma}(dx \times ds)
\end{eqnarray}
is an $\{\bar{\cal F}_{t+}^{\;\Theta,\tilde{X}}\}$-martingale.  Note we slightly abuse notation by using $\tilde{\Gamma}(\{-1,+1\}\times dx\times ds) = \tilde{\Gamma}(dx \times ds)$.

Now let $\tau_0 = \sup\{t<0: \Theta(t) \neq \Theta(0)\}$, let $\tau_1 = \inf\{t\geq 0: \Theta(t) \neq \Theta(0)\}$
and for $k \geq 1$, let $\tau_{k+1} = \inf\{t > \tau_k: \Theta(t) \neq \Theta(\tau_k)\}$.  Note that $\{\tau_k: k\geq 1\}$
give the jump times of the Markov chain $\Theta$ and thus have exponentially distributed interarrival times.  The
collection
$\{\tau_k: k \geq 1\}$ are renewal times of $\Theta$ (though they may not be renewal times of $(\Theta,\tilde{X})$ in that
the cycles may not be independent and identically distributed).  

For $t \geq 0$ define $X(t) = \tilde{X}(\tau_1+t)$, 
$\Gamma(\{\theta\} \times H \times [0,t]) = \tilde{\Gamma}(\{\theta\} \times H \times [\tau_1,\tau_1+t])$, where
$\theta=\pm 1$, $H \in {\cal B}(E)$, and ${\cal F}_t = \bar{\cal F}_{(\tau_1+t)+}^{\;\Theta,\tilde{X}}$.
The optional sampling theorem implies 
\begin{eqnarray*}
f(X(t)) &-& f(X(0)) \\
&-& \int_0^t \int_U \left[Af(X(s),u) + \alpha \left(\int f\,d\nu_0 - f(X(s))\right)\right]\eta_0(X(s),du)\, ds\\
&-& \int_{E \times [0,t]} \int_U Bf(x,u) \eta_1(x,du) \Gamma(dx \times ds) \\
\end{eqnarray*}
\begin{eqnarray*}
\rule{18pt}{0pt} &=& f(\tilde{X}(\tau_1+t)) - f(\tilde{X}(\tau_1)) \\
& & - \int_{\tau_1}^{\tau_1+t} \int_U\left[Af(\tilde{X}(s),u) 
+ \alpha\left(\int f\, d\nu_0 - f(\tilde{X}(s))\right)\right]\eta_0(\tilde{X}(s),du) ds\\
& & - \int_{E\times [\tau_1,\tau_1+t]}\int_U Bf(x,u) \eta_1(x,du) \tilde{\Gamma}(dx \times du \times ds) 
\end{eqnarray*}
is a martingale with respect to $\{{\cal F}_t\}=\{\bar{\cal F}_{(\tau_1+t)+}^{\; \Theta,\tilde{X}}\}$.  

Now for $t \geq 0$, define
$$L(t) = [\alpha(\tau_1-\tau_0)]^{-1} e^{\alpha t} I_{[0,\tau_2-\tau_1)}(t)$$
and observe that $L$ is a mean 1, $\{{\cal F}_t\}$-martingale.  
Define a new probability measure $\hat{P}$ having Radon-Nikodym derivative $L(t)$ on $\{{\cal F}_t\}$
with respect to the original probability $P$.  It follows that, for each $f \in \doma$,
\begin{eqnarray} \label{mg3}
L(t) f(X(t)) &-& \int_0^t \int_U L(s) Af(X(s),u)\, \eta_0(X(s),du)\, ds \\ \nonumber
&-& \int_{E \times [0,t]}\int_U L(s) Bf(x,u) \eta_1(x,du)\, \Gamma(dx \times ds)
\end{eqnarray}
is also an $\{{\cal F}_t\}$-martingale under $P$ and thus for each $n \geq 1$, $0 \leq t_1 \leq \cdots \leq t_n < t_{n+1}$,
$f \in \doma$
and $h_1,\ldots, h_n \in \bar{C}(E)$,
\begin{eqnarray*}
0 &=& \EE\left[\prod_{i=1}^n h_i(X(t_i)) \left(\rule{0pt}{18pt} L(t_{n+1})f(X(t_{n+1})) - L(t_n) f(X(t_n))\right. \right. \\
& & \qquad - \int_{t_n}^{t_{n+1}} \int_U L(s) Af(X(s),u) \eta_0(X(s),du) ds \\
& & \qquad - \left. \left. \int_{E \times [t_n,t_{n+1}]} \int_U L(s)Bf(x,u) \eta_1(x,du) \Gamma(dx \times ds) \right) \right]
 \\
&=& \EE\left[ \prod_{i=1}^n h_i(X(t_i)) [\alpha(\tau_1-\tau_0)]^{-1} \left(\rule{0pt}{18pt} 
e^{\alpha t_{n+1}}I_{[0,\tau_2-\tau_1)}(t_{n+1}) f(X(t_{n+1}) 
\right. \right. \\
& & \qquad -  \EE[e^{\alpha t_{n+1}}I_{[0,\tau_2-\tau_1)}(t_{n+1}) | {\cal F}_{t_n}] f(X(t_n)) \\
& & \qquad - \int_{t_n}^{t_{n+1}} \int_U \EE[e^{\alpha t_{n+1}}I_{[0,\tau_2-\tau_1)}(t_{n+1}) | {\cal F}_{s}]
Af(X(s),u)\eta_0(X(s),du) ds \\
& & \qquad - \left. \left. \int_{E \times [t_n,t_{n+1}]} \int_U 
\EE[e^{\alpha t_{n+1}}I_{[0,\tau_2-\tau_1)}(t_{n+1}) | {\cal F}_{s}]
Bf(x,u) \eta_1(x,du) \Gamma(dx \times ds) \rule{0pt}{18pt} \right)\right] \\
&=& \EE^{\hat{P}}\left[\prod_{i=1}^n h_i(X(t_i)) \left(\rule{0pt}{18pt} f(X(t_{n+1})) - f(X(t_n)) \right. \right. 
- \int_{t_n}^{t_{n+1}} \int_U Af(X(s),u)\eta_0(X(s),du) ds \\
& & \qquad \left. \left.
- \int_{E \times [t_n,t_{n+1}]} \int_U  Bf(x,u) \eta_1(x,du) \Gamma(dx \times ds) \rule{0pt}{18pt} \right)\right] .
\end{eqnarray*}
Thus letting $\Lambda_s(du) = \eta_0(X(s),du)$ and $\hat{\Gamma}(dx \times du \times ds) = \eta_1(x,du)\Gamma(\{-1,+1\}\times dx \times ds)$,
the triplet $(X,\Lambda,\hat{\Gamma})$ is a solution of the singular, controlled martingale problem for $(A,B,\nu_0)$
under $\hat{P}$.

We now show that this solution satisfies (\ref{costrep}).  Define the random measure $\tilde{\Phi}$ on $E \times [0,\infty)$
such that for every bounded, continuous $h$
$$\int_{E \times [0,\infty)} h(x,s)\, \tilde{\Phi}(dx \times ds) = \int_0^\infty h(\tilde{X}(s),s)\, ds.$$
Note that $\tilde{\Phi}$ has stationary increments and $\EE[\tilde{\Phi}(\cdot \times [0,t])] = t \mu_0^E(\cdot)$ for every $t > 0$.
Also recall $\tilde{\Gamma}$ has stationary increments and $\EE[\tilde{\Gamma}(\cdot \times [0,t])] = t \mu_1^E(\cdot)$.
The following argument applies to both random measures $\tilde{\Phi}$ and $\tilde{\Gamma}$ so let $\tilde{\Psi}$ denote either random measure, 
let $\Psi(\cdot \times[0,t]) = \tilde{\Psi}(\cdot \times [\tau_1,\tau_1+t])$ and let $\eta$ denote the
appropriate choice of $\eta_0$ or $\eta_1$.

For each $h \in \bar{C}(E \times U)$,
\begin{eqnarray*}
\lefteqn{\EE^{\hat{P}}\left[\alpha \int_{E \times [0,T]}\int_U 
e^{-\alpha t} h(x,u)\eta(x,du)\,\Psi(dx \times dt)\right] } \\
&=& \EE\left[\alpha [\alpha(\tau_1-\tau_0)]^{-1} e^{\alpha T}I_{[0,\tau_2-\tau_1)}(T) \int_{E \times [0,T]}\int_U 
e^{-\alpha t} h(x,u)\eta(x,du)\,\Psi(dx \times dt)\right]  \\
\end{eqnarray*}
\begin{eqnarray*}
&=& \EE\left[(\tau_1 - \tau_0)^{-1} \int_{E \times [0,T]}\int_U \EE\left[e^{\alpha T}I_{[0,\tau_2-\tau_1)}(T)|{\cal F}_t\right]
e^{-\alpha t} h(x,u)\eta(x,du)\,\Psi(dx \times dt)\right]  \\
&=& \EE\left[(\tau_1 - \tau_0)^{-1} \int_{E \times [\tau_1,\tau_2\wedge T]}\int_U 
h(x,u)\eta(x,du)\,\tilde{\Psi}(dx \times dt)\right]  
\end{eqnarray*}
and letting $T\rightarrow \infty$ yields
\begin{eqnarray} \label{alphaid} \nonumber
\lefteqn{\EE^{\hat{P}}\left[\alpha \int_{E \times [0,\infty)}\int_U 
e^{-\alpha t} h(x,u)\eta(x,du)\,\Psi(dx \times dt)\right] } \\
&=& \EE\left[(\tau_1 - \tau_0)^{-1} \int_{E \times [\tau_1,\tau_2]}\int_U 
h(x,u)\eta(x,du)\,\tilde{\Psi}(dx \times dt)\right].
\end{eqnarray}
Applying Lemma~\ref{statmeas1} to the right-hand-side when $\tilde{\Psi} = \tilde{\Phi}$ yields 
$$\EE^{\hat{P}}\left[\alpha \int_0^\infty\int_U e^{-\alpha t} h(X(t),u)\eta_0(X(t),du)\,dt\right] 
= \int h(x,u) \mu_0(dx\times du)$$
and when $\tilde{\Psi} = \tilde{\Gamma}$ we have
$$\EE^{\hat{P}}\left[\alpha \int_{E \times [0,\infty)}\int_U e^{-\alpha t} h(x,u)\eta_1(x,du)\,\Gamma(dx \times dt)\right] 
= \int h(x,u) \mu_1(dx\times du)$$
establishing the result.
\end{proof}
\medskip

As with Theorem~\ref{thm:stationary-existence} for the long-term average criterion, Theorem~\ref{exis:disc} gives the existence of a solution of the singular, controlled martingale problem whose discounted occupation measures are the pair $(\mu_0,\mu_1)$ satisfying (\ref{id1}).  This result also extends to the budget constraints.

\begin{coro}
Let $A$, $B$, $\psi_A$, $\psi_B$, $\mu_0$ and $\mu_1$ satisfy the hypotheses of Theorem~\ref{exis:disc} and let $X$ be a process and $\Gamma$ a random measure resulting from the theorem.  Define the relaxed control $\Lambda$ so that $\Lambda_s(\cdot) = \eta_0(X(s),\cdot)$ for $s \geq 0$ and the random measure $\tilde\Gamma$ by 
$$\tilde\Gamma(G_1\times G_2 \times [0,t]) = \int_{G_1 \times [0,t]} \eta_1(x,G_2)\, \Gamma(dx\times ds), \quad G_1 \in {\cal B}(E), G_2 \in {\cal B}(U), t\geq 0$$
so that $(X,\Lambda,\tilde\Gamma)$ is a solution of the singular, controlled martingale problem for $(A,B,\nu_0)$.  Then $(X,\Lambda,\tilde\Gamma)$ satisfies the discounted budget constraints (\ref{disc-budget-constrs}) if and only if
$$\int_{E\times U} g_i(x,u)\, \mu_0(dx\times du) + \int_{E\times U} h_i(x,u)\, \mu_1(dx\times du) \leq \alpha K_i, \quad i=1,\ldots,m.$$
\end{coro}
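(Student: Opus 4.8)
The plan is to read off this corollary as a direct specialization of the cost-representation identity (\ref{costrep}) in Theorem~\ref{exis:disc}, taking the running cost $c_0$ to be the budget function $g_i$ and the singular cost $c_1$ to be $h_i$. First I would check that $g_i$ and $h_i$ are legitimate test functions for (\ref{costrep}). Since each $g_i,h_i:E\times U\rightarrow\RR^+$ is lower semicontinuous, it is Borel measurable, and being nonnegative it falls precisely under the clause ``for every nonnegative $c_0,c_1\in M(E\times U)$'' under which (\ref{costrep}) is asserted. Hence that identity applies verbatim with $c_0=g_i$ and $c_1=h_i$ for each fixed $i$.

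Next I would unwind the definitions of $\Lambda$ and $\tilde\Gamma$ so that the left-hand side of the discounted budget constraint (\ref{disc-budget-constrs}), evaluated at the triplet $(X,\Lambda,\tilde\Gamma)$, coincides exactly with the left-hand side of (\ref{costrep}). Substituting $\Lambda_s(du)=\eta_0(X(s),du)$ and $\tilde\Gamma(dx\times du\times ds)=\eta_1(x,du)\,\Gamma(dx\times ds)$ turns the $i$-th budget expression into
$$\EE\left[\int_0^\infty\int_U e^{-\alpha s} g_i(X(s),u)\,\eta_0(X(s),du)\,ds + \int_{E\times[0,\infty)}\int_U e^{-\alpha s} h_i(x,u)\,\eta_1(x,du)\,\Gamma(dx\times ds)\right],$$
which is literally the left-hand side of (\ref{costrep}) with $c_0=g_i$ and $c_1=h_i$.

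Invoking (\ref{costrep}), this quantity equals $\alpha^{-1}\bigl[\int g_i\,d\mu_0+\int h_i\,d\mu_1\bigr]$. Consequently the $i$-th discounted budget constraint, which asserts that this quantity is at most $K_i$, holds if and only if $\alpha^{-1}\bigl[\int g_i\,d\mu_0+\int h_i\,d\mu_1\bigr]\le K_i$, that is, multiplying through by $\alpha>0$, if and only if $\int g_i\,d\mu_0+\int h_i\,d\mu_1\le\alpha K_i$. Because (\ref{costrep}) is an exact equality, both implications of the equivalence are obtained simultaneously, and letting $i$ range over $1,\dots,m$ yields the statement.

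I do not anticipate a genuine obstacle here: the entire analytic content---the change of measure to $\hat P$, the appeal to Lemma~\ref{statmeas1}, and the emergence of the $\alpha^{-1}$ scaling---has already been discharged in the proof of Theorem~\ref{exis:disc}. The only point requiring comment is the admissibility of $g_i,h_i$ as test functions, which is immediate from their lower semicontinuity and nonnegativity, so the corollary follows essentially by quoting (\ref{costrep}).
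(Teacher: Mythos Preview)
Your proof is correct and follows essentially the same approach as the paper: both reduce the corollary to an application of (\ref{costrep}) with $c_0=g_i$, $c_1=h_i$. The paper phrases it via the decomposition $g_i=g_i^+-g_i^-$, $h_i=h_i^+-h_i^-$ (invoking boundedness below), whereas you invoke the nonnegativity clause of (\ref{costrep}) directly; since the $g_i,h_i$ are declared to take values in $\RR^+$, your route is slightly cleaner but the content is identical.
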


\begin{proof}
Since each $g_i$ and $h_i$ in the budget constraints (\ref{lta-budget-constrs}) is bounded below, the result follows immediately by writing $g_i = g_i^+ - g_i^-$ and $h_i^+ - h_i^-$ and applying the (\ref{costrep}).
\end{proof}

Now consider the problem of minimizing the discounted cost (\ref{disccost}) over solutions $(X,\Lambda,\Gamma)$ of the singular, controlled martingale problem for $(A,B)$ satisfying the discounted budget constraints (\ref{disc-budget-constrs}). 

\begin{thm} \label{thm:disc-lp}
Assume Conditions~\ref{gencnd} and (\ref{costcnd1}) hold and let $\nu_0 \in {\cal P}(E)$ be given.  Let $\alpha > 0$ and define the generator $A^\alpha$ by $A^\alpha f = Af + \alpha (\int f\, d\nu_0 - f)$.  Then the problem of minimizing the discounted cost (\ref{disccost}) over admissible solutions $(X,\Lambda,\Gamma)$ of the discounted singular, controlled martingale problem for $(A,B,\nu_0)$ that satisfy the budget constraints (\ref{disc-budget-constrs}) is equivalent to the linear program 
\begin{equation} \label{disc-lp}
\begin{array}{ll}
\mbox{Minimize} & \displaystyle \alpha^{-1}\left(\int_{E\times U} c_0(x,u)\, \mu_0(dx\times du) + \int_{E\times U} c_1(x,u)\, \mu_1(dx\times du)\right) \rule[-15pt]{0pt}{15pt} \\
\mbox{Subject to} & \displaystyle \int_{E\times U} A^\alpha f(x,u)\, \mu_0(dx\times du) + \int_{E\times U} Bf(x,u)\, \mu_1(dx\times du) = 0, \\
& \hfill \forall\, f \in {\cal D}, \\
& \displaystyle \int_{E\times U} g_i(x,u)\, \mu_0(dx\times du) + \int_{E\times U} h_i(x,u)\, \mu_1(dx\times du) \leq \alpha K_i, \\
& \hfill i=1,\ldots,m,\\
& \mu_0 \in {\cal P}(E\times U), \mu_1 \in {\cal M}(E\times U).
\end{array} 
\end{equation}
Moreover, there exists an optimizing pair $(\mu_0^*,\mu_1^*)$ and, letting $\eta_0^*$ and $\eta_1^*$ be the transition functions defined by (\ref{etadefs}), an optimal absolutely continuous relaxed control is given in feedback form by $\{\Lambda^*_t=\eta_0^*(X^*(t),\cdot): t \geq 0\}$, in which $(X^*,\Gamma^*)$ is the process of Theorem~\ref{exis:disc}, and $\eta_1^*(x,\cdot)$ is an optimal relaxed singular control that is activated by the random measure $\Gamma^*$.
\end{thm}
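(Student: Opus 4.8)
The plan is to follow the two-sided comparison used for the long-term average problem in \thmref{thm:lta-lp}, replacing the empirical occupation measures by \emph{discounted} occupation measures and invoking \thmref{exis:disc} in place of \thmref{thm:stationary-existence}. For the lower bound, I would start from an arbitrary admissible solution $(X,\Lambda,\Gamma)$ of the discounted problem (so $X(0)\sim\nu_0$) with $J_\alpha(\Lambda,\Gamma)<\infty$ and define
\begin{eqnarray*}
\mu_0(G) &=& \alpha\,\EE\left[\int_0^\infty\int_U e^{-\alpha s} I_G(X(s),u)\,\Lambda_s(du)\,ds\right],\\
\mu_1(G) &=& \alpha\,\EE\left[\int_{E\times U\times[0,\infty)} e^{-\alpha s} I_G(x,u)\,\Gamma(dx\times du\times ds)\right],
\end{eqnarray*}
for $G\in\mathcal B(E\times U)$. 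The normalization $\alpha\int_0^\infty e^{-\alpha s}\,ds=1$ makes $\mu_0$ a probability measure, admissibility (\ref{lamadmis})--(\ref{gamadmis}) forces (\ref{muadmis0}), and \cndref{costcnd1}(c) together with finiteness of the cost or of some budget gives $\mu_1\in\mathcal M(E\times U)$; \cndref{costcnd1}(b) and Jensen's inequality then yield the integrability (\ref{integrability}). By construction $\alpha^{-1}\big(\int c_0\,d\mu_0+\int c_1\,d\mu_1\big)=J_\alpha(\Lambda,\Gamma)$, and the discounted budget constraints (\ref{disc-budget-constrs}) translate into the LP budget constraints with right-hand side $\alpha K_i$, so $(\mu_0,\mu_1)$ is feasible for (\ref{disc-lp}) with matching objective value.

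The step I expect to require the most care is verifying that $(\mu_0,\mu_1)$ satisfies the discounted adjoint relation (\ref{id1}). For this I would apply integration by parts to $e^{-\alpha t}f(X(t))$ using the martingale (\ref{mgp}), take expectations (so that $X(0)\sim\nu_0$ contributes $\int f\,d\nu_0$), and let $t\to\infty$; boundedness of $f$ kills the term $\EE[e^{-\alpha t}f(X(t))]$, while the bounds $|Af|\le a_f\psi_A$ and $|Bf|\le b_f\psi_B$ of \cndref{gencnd}(ii), combined with (\ref{integrability}), justify the interchange of limit and expectation and identify the remaining terms with $\int(Af+\alpha(\int f\,d\nu_0-f))\,d\mu_0+\int Bf\,d\mu_1$. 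This shows the LP value is a lower bound for the discounted control problem. For the reverse inequality I would take any feasible pair $(\mu_0,\mu_1)$ with finite objective; feasibility supplies (\ref{muadmis0}), (\ref{integrability}) (again via \cndref{costcnd1}(b)) and (\ref{id1}), so \thmref{exis:disc} produces a process $X$ and random measure $\Gamma$, and setting $\Lambda_t=\eta_0(X(t),\cdot)$ and $\tilde\Gamma(dx\times du\times ds)=\eta_1(x,du)\Gamma(dx\times ds)$ gives an admissible solution whose discounted cost equals $\alpha^{-1}(\int c_0\,d\mu_0+\int c_1\,d\mu_1)$ by the cost representation (\ref{costrep}); the accompanying corollary shows it meets (\ref{disc-budget-constrs}). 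Hence the two optimal values coincide and the problems are equivalent.

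To obtain an optimal pair I would take a minimizing sequence $\{(\mu_0^n,\mu_1^n)\}$ for (\ref{disc-lp}) and establish a discounted analogue of \propref{feasible-limits}. Rather than reprove it from scratch, I would lift to the enlarged state space $\{-1,+1\}\times E\times U$ from the proof of \thmref{exis:disc}, where the operators $A^\alpha,B^\alpha$ satisfy \cndref{gencnd} with $\psi_A^\alpha=\psi_A$, $\psi_B^\alpha=\psi_B$ and where the lifted cost and budget functions satisfy \cndref{costcnd1} (inf-compactness is preserved because $\{-1,+1\}$ is compact). Applying \propref{feasible-limits} there to the symmetrized lifts $(\tilde\mu_0^n,\tilde\mu_1^n)$ yields a feasible limit on the enlarged space; projecting onto $E\times U$ and testing the stationary adjoint relation against $\phi\equiv 1$ recovers a pair $(\mu_0^*,\mu_1^*)$ satisfying (\ref{id1}), the budget constraints, and objective at most the LP value, hence optimal. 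Finally, feeding $(\mu_0^*,\mu_1^*)$ into the converse construction above via \thmref{exis:disc} produces the claimed optimal solution, with the absolutely continuous relaxed control in feedback form $\Lambda_t^*=\eta_0^*(X^*(t),\cdot)$ and the singular relaxed control $\eta_1^*$ activated by $\Gamma^*$.
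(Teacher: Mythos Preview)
Your proposal is correct and follows essentially the same two-sided comparison as the paper: define the discounted occupation measures $\mu_0,\mu_1$ from an admissible solution, verify feasibility for (\ref{disc-lp}) with matching objective (the adjoint relation via integration by parts/It\^o on $e^{-\alpha t}f(X(t))$), and conversely build a solution from a feasible pair via \thmref{exis:disc} and (\ref{costrep}). The paper's proof is exactly this, stated a bit more tersely.

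The one place you diverge is in producing the optimal pair $(\mu_0^*,\mu_1^*)$. The paper simply invokes \propref{feasible-limits} on a minimizing sequence, relying on the observation (left implicit) that $A^\alpha f = Af + \alpha(\int f\,d\nu_0 - f)$ differs from $Af$ by a bounded term, so $A^\alpha$ satisfies \cndref{gencnd} with the same $\psi_A$ (after adjusting $a_f$) and the proof of \propref{feasible-limits} goes through verbatim with $A^\alpha$ in place of $A$. Your route---lifting to $\{-1,+1\}\times E\times U$, applying \propref{feasible-limits} there to the symmetrized measures, and projecting back---is correct but more elaborate than necessary. It does have the virtue of making the application of \propref{feasible-limits} literally match its hypotheses without any side remark, whereas the paper's direct invocation requires the reader to notice that $A^\alpha$ inherits the relevant structure. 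Either way the argument closes.
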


\begin{proof}
Similar to the argument in Theorem~\ref{thm:lta-lp}, if every solution $(X,\Lambda,\Gamma)$ of the singular, controlled martingale problem for $(A,B,\nu_0)$ has infinite value for the discounted cost (\ref{disccost}), then the value of each feasible solution of the linear program (\ref{disc-lp}) is also infinite and every solution is optimal, though not desired.

Let $(X,\Lambda,\Gamma)$ be an admissible solution of the singular, controlled martingale problem for $(A,B,\nu_0)$ for which (\ref{disccost}) is finite and the budget constraints (\ref{disc-budget-constrs}) are satisfied.  Define the measures $\mu_0$ and $\mu_1$ by 
$$\begin{array}{rcll}
\mu_0(G) &=& \displaystyle \alpha \EE\left[\int_0^\infty \int_U e^{-\alpha s} I_{G}(X(s),u)\, \Lambda_s(du)\, ds\right] \rule[-15pt]{0pt}{15pt}, & G\in {\cal B}(E\times U), \\
\mu_1(G) &=& \displaystyle \alpha \EE\left[\int_{E\times U\times [0,\infty)}e^{-\alpha s} I_{G}(x,u)\, \Gamma(dx\times du \times ds)\right], & G\in {\cal B}(E\times U)
\end{array}$$
and note that $\mu_1 \in {\cal M}(E\times U)$ by Condition~\ref{costcnd1}(c).  It immediately follows that the value of (\ref{disccost}) is given by the objective function of (\ref{disc-lp}) and the budget constraints are represented by the collection of linear programming constraints involving $g_i$ and $h_i$ for $i=1,\ldots, m$.  The first set of linear programming constraints follow by an application of It\^{o}'s formula on $e^{-\alpha t} f(X(t))$, taking expectations and passing to the limit as $t\rightarrow \infty$.

Now consider any feasible pair $(\mu_0,\mu_1)$ of (\ref{disc-lp}) for which the value of the objective function is finite.  Condition~\ref{costcnd1} implies that (\ref{integrability}) and (\ref{id1}) hold.  By Theorem~\ref{exis:disc}, there exists a process $X$ and a random measure $\Gamma$ for which the process $X$, the feedback control $\{\eta_0(X(t),\cdot):t \geq 0\}$ and the random measure $\tilde{\Gamma}(dx\times du\times ds) = \eta_1(x,du) \Gamma(dx\times ds)$ is a relaxed solution of the singular, controlled martingale problem for $(A,B,\nu_0)$.  Moreover, the corresponding discounted cost (\ref{disccost}) is given by the value of the objective function of (\ref{disc-lp}) and the budget constraints (\ref{disc-budget-constrs}) also follow from (\ref{costrep}).  

Thus to each admissible relaxed solution $(X,\Lambda,\Gamma)$ of the singular, controlled martingale problem for $(A,B,\nu_0)$, there exists a feasible pair of measures $(\mu_0,\mu_1)$ to (\ref{disc-lp}) and, to each feasible pair $(\mu_0,\mu_1)$, there exists a relaxed solution of the singular, controlled martingale problem for $(A,B,\nu_0)$ for which, in both cases, the value $J_\alpha(\Lambda,\Gamma;\nu_0)$ of (\ref{disccost}) and the objective function are equal.  

Let $c^*$ denote the infimal value of (\ref{disc-lp}).  Existence of an optimizing pair $(\mu_0^*,\mu_1^*)$ follows by applying Proposition~\ref{feasible-limits} to any sequence $\{(\mu_0^n,\mu_1^n)\}$ for which $\int c_0\, d\mu_0^n + \int c_1\, d\mu_1^n < c_* + \frac{1}{n}$.  Optimality of the relaxed feedback controls $\eta_0$ and $\eta_1$ follows from Theorem~\ref{exis:disc}.
\end{proof}

\begin{rem} \label{rescaling}
We observe that a simple rescaling argument shows the equivalence of the discounted control problem with the linear program
\begin{equation} \label{disc-lp2}
\begin{array}{lll}
\mbox{Min.} & \displaystyle \int_{E\times U} c_0\, d\mu_0 + \int_{E\times U} c_1\, d\mu_1 & \rule[-15pt]{0pt}{15pt} \\
\mbox{Subj. to} & \displaystyle \int_{E\times U} (Af-\alpha f)\, d\mu_0 + \int_{E\times U} Bf\, d\mu_1 = - \int f\, d\mu_0, & \quad \forall\, f \in {\cal D}, \\
& \displaystyle \int_{E\times U} g_i\, d\mu_0 + \int_{E\times U} h_i\, d\mu_1 \leq K_i, & \quad i=1,\ldots, m, \\
& \mu_0 \in {\cal M}(E\times U), \mu_1 \in {\cal M}(E\times U). &
\end{array} 
\end{equation}
The mass condition $\mu_0(E\times U) = \frac{1}{\alpha}$ follows from the first family of constraints by considering $f\equiv 1$.
\end{rem}

\setcounter{equation}{0}

\section{Existence of Optimal Strict Feedback Controls} \label{sect:strict}
We now provide a set of sufficient conditions under which one is able to strengthen the existence of optimal controls from the class of relaxed controls to the class of strict controls.  %A $U$-valued process $u$ is a strict control for the singular, controlled martingale problem for $(A,B,\nu_0)$ if there exists some filtration $\{{\cal F}_t\}$, $E$-valued process $X$ and random measure $\Gamma$ on $E\times U\times [0,\infty)$ such that $(X,u,\Gamma_t)$ is $\{{\cal F}_t\}$-progressively measurable, $X(0)$ has distribution $\nu_0$ and, for every $f \in {\cal D}$,
%$$f(X(t)) - f(X(0)) - \int_0^t Af(X(s),u(s))\, ds - \int_{E\times U \times [0,t]} Bf(x,u)\, \Gamma(dx \times du \times ds)$$
%is an $\{{\cal F}_t\}$-martingale.  
A strict feedback control is a measurable function $u:E\rightarrow U$ for which there exists some filtration $\{{\cal F}_t\}$, $E$-valued process $X$ and random measure $\Gamma$ on $E\times [0,\infty)$ such that $(X,\Gamma_t)$ is $\{{\cal F}_t\}$-progressively measurable, $X(0)$ has distribution $\nu_0$ and, for every $f \in {\cal D}$,
$$f(X(t)) - f(X(0)) - \int_0^t Af(X(s),u(X(s)))\, ds - \int_{E \times [0,t]} Bf(x,u(x))\, \Gamma(dx \times ds)$$
is an $\{{\cal F}_t\}$-martingale.  We establish conditions for the existence of an optimal strict feedback control.

This section extends the results in \cite{dufo:12} from absolutely continuously controlled processes to include singular and singularly controlled processes.  Lemma~3.1 of \cite{dufo:12}, which is a mild extension of Theorem~A.9 of \cite{haus:90}, provides a crucial measurable selection result.  The proof of Theorem~\ref{thm:strict-control} below also relies on a result in \cite{warg:72}.  For completeness of exposition, we state these results.

\begin{thm}[Theorem~I.6.13 (p.~145) of \cite{warg:72}] \label{warga}
Let $(S,\Sigma,\mu)$ be a probability measure space, ${\cal X}$ a separable Banach space, $C$ a closed, convex subset of ${\cal X}$, and $f:S\rightarrow C$ $\mu$-integrable.  Then
$$\int f(s)\, \mu(ds) \in C.$$
\end{thm}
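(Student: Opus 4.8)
The plan is to argue by contradiction using the geometric (separation) form of the Hahn--Banach theorem. I would write $m := \int_S f(s)\, \mu(ds)$, which is well defined as a Bochner integral precisely because ${\cal X}$ is separable (so that, by the Pettis measurability theorem, strong and weak measurability coincide) and $f$ is $\mu$-integrable. Suppose, toward a contradiction, that $m \notin C$. Since $C$ is a nonempty closed convex subset of the Banach space ${\cal X}$ and the singleton $\{m\}$ is a compact convex set disjoint from $C$, the Hahn--Banach separation theorem would supply a continuous linear functional $\ell \in {\cal X}^*$ and a real number $\gamma$ with $\ell(y) \leq \gamma < \ell(m)$ for every $y \in C$.

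The key step is then to move $\ell$ inside the integral. Because $\ell$ is a bounded linear functional and $f$ is Bochner integrable, one has $\ell\!\left(\int_S f\, d\mu\right) = \int_S \ell(f(s))\, \mu(ds)$. Since $f(s) \in C$ for each $s$, the separating inequality gives $\ell(f(s)) \leq \gamma$ pointwise; integrating against the probability measure $\mu$ then yields $\ell(m) = \int_S \ell(f(s))\, \mu(ds) \leq \gamma\, \mu(S) = \gamma$. This contradicts $\gamma < \ell(m)$, so no separating functional can exist, and therefore $m \in C$.

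The only genuinely technical point --- and hence what I expect to be the main obstacle --- is the commutation $\ell\!\left(\int f\, d\mu\right) = \int \ell(f)\, d\mu$. This is a standard property of the Bochner integral: it holds for any $\ell \in {\cal X}^*$ because $\ell$ is continuous and linear and the simple-function approximations of $f$ converge to $f$ in the Bochner norm, along which $\ell$ passes to the limit. Separability of ${\cal X}$ is exactly what guarantees the Bochner integral exists in the first place, and the hypothesis that $\mu$ is a probability measure is what converts the pointwise bound $\ell(f(s)) \leq \gamma$ into $\ell(m) \leq \gamma$. Everything else reduces to the textbook strict separation of a point from a disjoint closed convex set in a locally convex space.
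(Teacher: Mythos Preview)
Your argument is correct and is the standard proof via Hahn--Banach separation and the commutation of bounded linear functionals with the Bochner integral. Note, however, that the paper does not supply its own proof of this statement: it is quoted verbatim from Warga's book (Theorem~I.6.13) purely for completeness of exposition, so there is no ``paper's proof'' to compare against. Your write-up is exactly the argument one finds in standard references, and nothing is missing.
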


\begin{lem}[Lemma~3.1 of \cite{dufo:12}] \label{lem-dufo}
Let $\psi : E\times U \mapsto \overline{\RR}^{n+1}$ and $\phi:E\times U \rightarrow \RR^{\NN}$ be measurable functions with $\psi$ bounded below, $\psi(x,\cdot)$ lower semi-continuous and $\phi(x,\cdot)$ continuous for each $x$. Assume also that $\psi_{j_0}$ is inf-compact for some $0\leq j_0 \leq n$.  First define the set $k(x) = \{(z,u)\in \RR^{n+1}\times U: z_i \geq \psi_i(x,u), i=0,\ldots,n\}$ and then define
$$K(x)=\big\{ (z,\phi(x,u))\in \RR^{n+1}\times \RR^{\NN}: (z,u) \in k(x) \big\}.$$
Let $h_{1}: E \mapsto \RR^{n+1}$ and $h_{2} : E \mapsto \RR^{\NN}$ be measurable functions such that for all $x\in E$, $(h_{1}(x),h_{2}(x))\in K(x)$. Then there exists a measurable function $\widehat{u} : E \mapsto U$ such that
for all $x\in E$, $h_{1}(x) \geq \psi(x,\widehat{u}(x))$ and $h_{2}(x) = \phi(x,\widehat{u}(x))$.
\end{lem}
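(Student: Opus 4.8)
The plan is to recast the conclusion as a measurable selection statement for a suitable multifunction and then invoke the Kuratowski--Ryll-Nardzewski selection theorem. Writing $h_{1,i}$ and $h_{2,j}$ for the coordinates of $h_1$ and $h_2$, for each $x \in E$ I would define
$$\Phi(x) = \{u \in U : \psi_i(x,u) \leq h_{1,i}(x) \text{ for } i=0,\ldots,n,\ \text{and}\ \phi(x,u) = h_2(x)\}.$$
The hypothesis $(h_1(x),h_2(x)) \in K(x)$ is exactly the assertion that $\Phi(x) \neq \emptyset$ for every $x$, and a measurable selection $\widehat{u}$ of $\Phi$ is precisely a function satisfying the claimed inequalities and equality. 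Thus it suffices to produce a Borel measurable selection of $x \mapsto \Phi(x)$.

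First I would record that $\Phi$ has nonempty compact values. Nonemptiness is the hypothesis. For compactness, fix $x$: since $\psi(x,\cdot)$ is lower semicontinuous, each sublevel set $\{u : \psi_i(x,u) \leq h_{1,i}(x)\}$ is closed, and since $\phi(x,\cdot)$ is continuous, each set $\{u : \phi_j(x,u) = h_{2,j}(x)\}$ is closed, so $\Phi(x)$ is closed. The inf-compactness of $\psi_{j_0}$ makes $\Gamma_0(x) := \{u : \psi_{j_0}(x,u) \leq h_{1,j_0}(x)\}$ compact, and $\Phi(x) \subset \Gamma_0(x)$; hence $\Phi(x)$ is a closed subset of a compact set, thus compact.

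Next I would establish that $\Phi$ is a measurable multifunction by expressing it as a countable intersection of measurable closed-valued multifunctions and exploiting the compactness of one factor. Each map $(x,u) \mapsto \psi_i(x,u) - h_{1,i}(x)$ is measurable in $x$ and lower semicontinuous in $u$, i.e.\ a normal integrand, so the sublevel multifunction $\Gamma_i(x) = \{u : \psi_i(x,u) \leq h_{1,i}(x)\}$ is measurable; likewise each $(x,u)\mapsto \phi_j(x,u) - h_{2,j}(x)$ is Carath\'eodory (measurable in $x$, continuous in $u$), so $\Delta_j(x) = \{u : \phi_j(x,u) = h_{2,j}(x)\}$ is a measurable closed-valued multifunction. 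Then
$$\Phi(x) = \Gamma_0(x) \cap \bigcap_{i \neq j_0} \Gamma_i(x) \cap \bigcap_{j \in \NN} \Delta_j(x),$$
and because $\Gamma_0$ is compact-valued, this countable intersection of measurable closed-valued multifunctions is again measurable and compact-valued. With $U$ Polish and $\Phi$ a nonempty, compact-valued, measurable multifunction, the Kuratowski--Ryll-Nardzewski theorem yields a Borel measurable $\widehat{u} : E \to U$ with $\widehat{u}(x) \in \Phi(x)$ for all $x$, which is the required selection.

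The main obstacle is the measurability of $\Phi$, specifically the simultaneous treatment of the lower-semicontinuous inequality constraints, the \emph{equality} constraints valued in the infinite product $\RR^{\NN}$, and the extended-real range of $\psi$. Its resolution rests on two structural facts: the inf-compactness of $\psi_{j_0}$, which confines every value of $\Phi$ to a compact set and thereby guarantees that the countable intersection of the constituent multifunctions remains measurable (an arbitrary intersection of measurable closed-valued multifunctions need not be measurable, but it is when one factor is compact-valued); and the normal-integrand and Carath\'eodory structure of the constraint functions, which makes each constituent multifunction measurable to begin with. Once these are in place, the selection theorem applies routinely.
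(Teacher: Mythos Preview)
The paper does not supply its own proof of this lemma: it is quoted verbatim as Lemma~3.1 of Dufour--Stockbridge (2012), itself described there as a mild extension of Theorem~A.9 of Haussmann--Lepeltier, and no argument is reproduced here. Consequently there is no in-paper proof to compare against.

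Your approach is the standard one for results of this type and is essentially what underlies the cited sources: recast the conclusion as selecting from the nonempty set-valued map $\Phi(x)=\{u:\psi(x,u)\le h_1(x),\ \phi(x,u)=h_2(x)\}$, use the inf-compactness of $\psi_{j_0}$ to obtain compact values, and invoke a measurable-selection theorem. Two technical points are worth tightening. First, the implication ``jointly measurable $+$ lower semicontinuous in $u$ $\Rightarrow$ normal integrand'' generally requires the $\sigma$-algebra on $E$ to be complete (cf.\ Rockafellar--Wets, Proposition~14.39), so you should either work with the universal completion of ${\cal B}(E)$ or argue measurability of each $\Gamma_i$ directly. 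Second, the stability of weak (Effros) measurability under countable intersections is not automatic; the clean way is to note that $\Gamma_{j_0}$ is compact-valued and measurable, and then show that intersecting a compact-valued measurable multifunction with the zero set of a Carath\'eodory function (for the $\phi_j$ equalities) or the sublevel set of a normal integrand (for the remaining $\psi_i$ inequalities) preserves measurability, iterating countably. With these refinements your argument goes through and matches the route taken in the cited references.
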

\medskip

\noindent
{\em Long-term Average Problems.}\/  Consider the long-term average control problem of minimizing $J_0(\Lambda,\Gamma)$ in (\ref{ltacost}) over solutions $(X,\Lambda,\Gamma)$ of the singular, controlled martingale problem satisfying the budget and resource constraints (\ref{lta-budget-constrs}).  We introduce the key convexity condition under which the existence of an optimal strict control can be selected.  

Recall, for some $m<\infty$, the functions in the budget constraints $g_i,h_i: E\times U \rightarrow \RR^+$, $i = 1,\ldots, m$, are lower semicontinuous and bounded below.  Let $\{f_k\}\subset {\cal D}$ denote the countable collection of Condition~\ref{gencnd}(iii).  For each $x\in E$, define the sets 
\begin{equation} \label{kappa-def}
\begin{array}{rcl}
\kappa(x) &=& \{(z,u)\in \RR^{2m+2}\times U: z_0 \geq c_0(x,u), z_1 \geq c_1(x,u), \\
& & \qquad \qquad \qquad \qquad \qquad z_{2i} \geq g_i(x,u), z_{2i+1} \geq h_i(x,u),  i = 1, \ldots, m\}
\end{array}
\end{equation}
and 
\begin{equation} \label{K-def}
{\cal K}(x) = \left\{\big(z,(Af_k(x,u),Bf_k(x,u))_{k\in \NN}\big)\in \RR^{2m+2}\times\RR^{\NN}: (z,u) \in \kappa(x)\right\}.
\end{equation}
Observe that in the set ${\cal K}(x)$, the $z_0$-coordinate gives the epi-graph of $c_0(x,\cdot)$, the $z_1$-coordinate is the epi-graph of $c_1(x,\cdot)$ and similarly, for $i\in \{1,\ldots,m\}$, $z_{2i+1}$ produces the epi-graph of $g_i(x,\cdot)$ while $z_{2i+2}$ yields the epi-graph of $h_i(x,\cdot)$.  

\begin{cnd} \label{convexity}
For each $x$, the set ${\cal K}(x)$ is closed and convex.  
\end{cnd}

We only consider control problems for which the cost associated with some control (and hence the optimal value) is finite since otherwise every control policy is trivially optimal and there is nothing to prove.

\begin{thm} \label{thm:strict-control}
Assume Conditions~\ref{gencnd}, \ref{costcnd1} and \ref{convexity} hold.  Let $c^*<\infty$ denote the optimal value of the linear program (\ref{lta-lp}).  Then there exists an optimal strict control $u^*:E\rightarrow U$.
\end{thm}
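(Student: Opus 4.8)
The plan is to start from the optimal relaxed pair $(\mu_0^*,\mu_1^*)$ furnished by \thmref{thm:lta-lp}, disintegrate it as $\mu_i^*(dx\times du)=\eta_i^*(x,du)\,\mu_i^{*E}(dx)$, and then use a measurable selection to replace the relaxed transition functions $\eta_0^*,\eta_1^*$ by a single deterministic feedback $u^*:E\to U$ without increasing the cost or violating the constraints. Once such a $u^*$ is in hand, the measures $\mu_i^{\rm new}(dx\times du)=\delta_{u^*(x)}(du)\,\mu_i^{*E}(dx)$ will still satisfy the adjoint relation (\ref{id.1}) and the budget constraints (\ref{meas-lta-budget-constrs}) with $\int c_0\,d\mu_0^{\rm new}+\int c_1\,d\mu_1^{\rm new}\le c^*$; feeding them into \thmref{thm:stationary-existence} produces a stationary process and random measure realizing $u^*$ as an admissible strict feedback control, and since $c^*$ is the minimal value of (\ref{lta-lp}) its cost must in fact equal $c^*$, giving optimality.

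For the selection itself I would work pointwise in $x$. For each $x$ let $P(x,u)=\big((c_0,c_1,g_1,h_1,\dots,g_m,h_m)(x,u),(Af_k(x,u),Bf_k(x,u))_{k}\big)$, so that $P(x,u)$, with its cost/budget coordinates read in the epigraph sense, is a generic generator of the set ${\cal K}(x)$ of (\ref{K-def}). Averaging $P(x,\cdot)$ against $\eta_0^*(x,\cdot)$ and against $\eta_1^*(x,\cdot)$ and invoking \thmref{warga} together with the convexity and closedness of ${\cal K}(x)$ (\cndref{convexity}) shows that both averaged vectors $R_0(x)=\int P(x,u)\,\eta_0^*(x,du)$ and $R_1(x)=\int P(x,u)\,\eta_1^*(x,du)$ lie in ${\cal K}(x)$. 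The vector I actually need to realize is the ``mixed'' one whose drift-generator coordinates $(Af_k)$ and $c_0,g_i$ coordinates come from $R_0(x)$ while its singular-generator coordinates $(Bf_k)$ and $c_1,h_i$ coordinates come from $R_1(x)$: if this mixed vector lies in ${\cal K}(x)$, then \lemref{lem-dufo}, applied with $\psi$ collecting the $2m+2$ cost and budget functions and $\phi$ collecting the countable family $(Af_k,Bf_k)_k$ of \cndref{gencnd}(iii), yields a measurable $u^*(x)$ with $c_0(x,u^*(x))\le \int c_0\,\eta_0^*$, $c_1(x,u^*(x))\le \int c_1\,\eta_1^*$, the analogous budget inequalities, and $Af_k(x,u^*(x))=\int Af_k\,\eta_0^*$, $Bf_k(x,u^*(x))=\int Bf_k\,\eta_1^*$ for every $k$. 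Integrating these generator identities against $\mu_0^{*E}$ and $\mu_1^{*E}$ and adding reproduces the adjoint relation exactly, while integrating the cost and budget inequalities gives the bound on $\int c_0\,d\mu_0^{\rm new}+\int c_1\,d\mu_1^{\rm new}$ and the budget constraints; the inf-compactness in \cndref{costcnd1}(a), the domination in \cndref{costcnd1}(b), and the positivity in \cndref{costcnd1}(c) guarantee that the relevant integrals are finite and that $\mu_1^{\rm new}$ is a finite measure.

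The step I expect to be the main obstacle is precisely the claim that this mixed vector lies in ${\cal K}(x)$, i.e. that a \emph{single} control value can simultaneously reproduce the drift data generated by $\eta_0^*$ and the singular data generated by $\eta_1^*$. Where the two state marginals $\mu_0^{*E}$ and $\mu_1^{*E}$ are mutually singular this is painless: on the support of $\mu_0^{*E}$ the singular coordinates are integrated against a null measure and may be chosen freely, and symmetrically on the support of $\mu_1^{*E}$, so one simply pieces together the two one-sided selections obtained from $R_0$ and from $R_1$. The genuine difficulty is the set where the two marginals overlap, since there one $u^*(x)$ must serve both the absolutely continuous and the singular generators at once, and the mixed target is not a convex combination of $R_0(x)$ and $R_1(x)$. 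This is exactly the situation \cndref{convexity} is tailored to, since it bundles $c_0,c_1$ and the whole family $(Af_k,Bf_k)_k$ into a single convex, closed set rather than treating the drift and singular data in separate convex sets. Establishing reachability of the mixed target on the overlap is therefore the crux, and is where I would concentrate the technical effort, alongside the routine verification of the measurability of $x\mapsto{\cal K}(x)$ and of the hypotheses of \lemref{lem-dufo} (lower semicontinuity and inf-compactness of the cost/budget data from \cndref{costcnd1}, continuity of $Af_k,Bf_k$ from \cndref{gencnd}).
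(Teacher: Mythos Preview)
Your strategy coincides with the paper's: take the optimal pair $(\mu_0^*,\mu_1^*)$ from \thmref{thm:lta-lp}, disintegrate into $\eta_0^*,\eta_1^*$, use \cndref{convexity} together with \thmref{warga} to place the averaged vector in ${\cal K}(x)$, apply \lemref{lem-dufo} to extract a measurable selector $u^*$, and verify that $(\delta_{u^*(\cdot)}\mu_0^{*E},\delta_{u^*(\cdot)}\mu_1^{*E})$ is feasible for (\ref{lta-lp}) with the same objective value.

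The one point of divergence is precisely the step you flag as the crux. You worry about showing that the \emph{mixed} target --- $Af_k$- and $(c_0,g_i)$-coordinates averaged against $\eta_0^*$, but $Bf_k$- and $(c_1,h_i)$-coordinates averaged against $\eta_1^*$ --- lies in ${\cal K}(x)$, and you sketch a program (split according to mutual singularity of $\mu_0^{*E}$ and $\mu_1^{*E}$; concentrate on the overlap) to attack it. The paper writes down exactly this mixed vector and asserts in a single sentence that \thmref{warga} places it in ${\cal K}(x)$; it does not explain how a single averaging measure produces the mixed combination, and it draws no distinction between the overlap and non-overlap regions. So the obstacle you anticipate is not resolved in the paper by any additional mechanism --- the paper simply cites Warga and moves directly to the selection lemma and the identities (\ref{u-star-id1a})--(\ref{u-star-id3b}). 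Your diagnosis of where the difficulty sits is accurate; you should just be aware that the paper's own proof offers no further argument at that spot.
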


\begin{proof}
Under the hypotheses of the theorem, Theorem~\ref{thm:lta-lp} establishes the existence of an optimal pair $(\mu_0^*,\mu_1^*)$ for the linear program (\ref{lta-lp}).  For $i=0,1$, let $\eta_i^*$ be the transition function satisfying (\ref{etadefs}) for the measures $(\mu_0^*,\mu_1^*)$.  Apply Theorem~\ref{thm:stationary-existence} to obtain $(X^*,\Gamma^*)$ and define the relaxed control $\Lambda^*$ by $\Lambda_s^*(\cdot) = \eta_0^*(X^*(s),\cdot)$ for $s \geq 0$ and the $E\times U \times [0,\infty)$-measure valued random variable $\tilde{\Gamma}^*$ by 
$$\tilde{\Gamma}^*(G_1\times G_2 \times [0,t]) = \int_{G_1\times [0,t]} \eta_1^*(x,G_2)\, \Gamma^*(dx\times ds).$$
It then follows that $(X^*,\Lambda^*,\tilde\Gamma^*)$ is a stationary solution of the singular, controlled martingale problem for $(A,B)$ satisfying the budget constraints (\ref{lta-budget-constrs}) such that
$$J_0(\Lambda^*,\tilde\Gamma^*) = \int c_0(x,u)\, \mu_0^*(dx\times du) + \int c_1(x,u)\, \mu_1^*(dx\times du).$$

Using $\eta_0^*$ and $\eta_1^*$, define
$$\begin{array}{rclcrcll}
\overline{c}_0(x) &=&\displaystyle \int_U c_0(x,u)\, \eta_0^*(x,du), &\qquad& \overline{c}_1(x)&=& \displaystyle \int_U c_1(x,u)\, \eta_1^*(x,du),  & \rule[-12pt]{0pt}{12pt} \\
\overline{g}_i(x) &=&\displaystyle \int_U g_i(x,u)\, \eta_0^*(x,du), &\qquad& \overline{h}_i(x)&=& \displaystyle \int_U h_i(x,u)\, \eta_1^*(x,du),  & \quad i=1\ldots,m, \rule[-12pt]{0pt}{12pt} \\
\overline{Af}_k(x) &=& \displaystyle \int_U Af_k(x,u)\, \eta_0^*(x,du) & & \overline{Bf}_k(x)&=& \displaystyle \int_U Bf_k(x,u)\, \eta_1^*(x,du), & \quad k\in \NN.
\end{array}$$
Then Theorem~\ref{warga} implies that for each $x$, 
$$(\bar{c}_0(x),\bar{c}_1(x),\bar{g}_1(x),\bar{h}_1(x),\ldots,\bar{g}_m(x),\bar{h}_m(x), \bar{Af}_1(x),\bar{Bf}_1(x), \bar{Af}_2(x),\bar{Bf}_2(x),\ldots) \in {\cal K}(x).$$
Consequently, Lemma~\ref{lem-dufo} establishes the existence of measurable functions $u^*:E\rightarrow U$ and $v^*:E \rightarrow \RR_+^{2m+2}$ such that for all $x \in E$,
\begin{eqnarray} \label{u-star-id1a} 
c_0(x,u^{*}(x)) + v_0^*(x) = \bar{c}_0(x)  &=& \int_{U} c_0(x,u)\, \eta_0^{*}(x,du)         \rule[-15pt]{0pt}{12pt} \\ \label{u-star-id1b} 
c_1(x,u^{*}(x)) + v_0^*(x) = \bar{c}_1(x)  &=& \int_{U} c_1(x,u)\, \eta_1^{*}(x,du)         \rule[-15pt]{0pt}{12pt} \\ \label{u-star-id2a}
Af_{k}(x,u^{*}(x))     = \bar{Af}_k(x) &=& \int_{U} Af_{k}(x,u)\, \eta_0^{*}(x,du),  \quad k\in \NN, \rule[-15pt]{0pt}{12pt}\\ \label{u-star-id2b}
Bf_{k}(x,u^{*}(x))     = \bar{Af}_k(x) &=& \int_{U} Bf_{k}(x,u)\, \eta_1^{*}(x,du),  \quad k\in \NN, \rule[-15pt]{0pt}{12pt}\\ \label{u-star-id3a}
g_i(x,u^*(x)) + v_i^*(x) = \bar{g}_i(x)  &=& \int_U g_i(x,u)\, \eta_0^*(x,du),         \qquad i\in \{1,\ldots,m\}\rule[-15pt]{0pt}{12pt}\\ \label{u-star-id3b}
h_i(x,u^*(x)) + v_i^*(x) = \bar{h}_i(x)  &=& \int_U h_i(x,u)\, \eta_1^*(x,du),         \qquad i\in \{1,\ldots,m\}.
\end{eqnarray}
Since $(X^*,\Lambda^*,\tilde\Gamma^*)$ is a solution of the singular, controlled martingale problem for $(A,B)$, using identities (\ref{u-star-id2a}) and (\ref{u-star-id2b}) in (\ref{mgp}) implies that for each $k$,
\begin{eqnarray*}
f_k(X^*(t)) - f_k(X^*(0)) &-& \int_0^t Af_k(X^*(s),u^*(X^*(s)))\, ds \\
&+& \int_{E\times [0,t]} Bf_k(x,u^*(x))\, \Gamma^*(dx\times ds)
\end{eqnarray*}
is a martingale.  Condition~\ref{gencnd}(iii) therefore implies that $(X^*,\delta_{\{u^*(X^*)\}},\delta_{\{u^*(\cdot)\}} \Gamma^*)$ is a solution of the singular, controlled martingale problem for $(A,B)$ and thus $u^*$ is a strict control.  Identities (\ref{u-star-id2a}) and (\ref{u-star-id2b}) also imply that 
$$\int Af(x,u^*(x))\, \mu_0^{*E}(dx) + \int Bf(x,u^*(x))\, \mu_1^{*E}(dx) = 0 \qquad \forall f\in {\cal D}.$$
In addition, identities (\ref{u-star-id3a}) and (\ref{u-star-id3b}) yield for $i=1,\ldots, m$,
\begin{eqnarray*}
\lefteqn{\int g_i(x,u^*(x))\, \mu_0^{*E}(dx) + \int h_i(x,u^*(x))\, \mu_1^{*E}(dx)} \\
&\qquad \leq& \int g_i(x,u)\, \mu_0^*(dx\times du) + \int h_i(x,u)\, \mu_1^*(dx\times du) \leq K_i.
\end{eqnarray*} 
Thus $(\delta_{\{u^*(\cdot)\}}\mu_0^{*E},\delta_{\{u^*(\cdot)\}}\mu_1^{*E})$ is feasible for the linear program (\ref{lta-lp}).   Optimality of $(\mu_0^*,\mu_1^*)$ along with the identities (\ref{u-star-id1a}) and (\ref{u-star-id1b}) now establishes that
\begin{eqnarray*}
\lefteqn{\int c_0(x,u^*(x))\, \mu_0^{*E}(dx) + \int c_1(x,u^*(x))\, \mu_1^{*E}(dx)} \\
&\quad =& \int c_0(x,u)\, \mu_0^*(dx\times du) + \int c_1(x,u)\, \mu_1^*(dx\times du)
\end{eqnarray*}
and hence $u^*$ is an optimal strict feedback control.
\end{proof}
\medskip

\noindent
{\em Discounted Problems.}\/  In comparing the linear programs in Theorem~\ref{thm:lta-lp} and Theorem~\ref{thm:disc-lp}, the only difference lies in the generators $A$ and $A^\alpha$.  Intuitively, this observation means that one may solve a discounted control problem for a process having generator $A$ by solving a long-term average control problem for a process with generator $A^\alpha$.  Under $A^\alpha$, the dynamics follows the evolution associated with generator $A$ for an exponentially distributed length of time.  At the occurrence of this exponential time, the process reinitializes and proceeds to continually cycle.  Unfortunately, these cycles may not be independent so the proof of Theorem~\ref{exis:disc} cannot rely on renewal arguments. 

This relation between $A^\alpha$ and $A$ is important with regard to the existence of an optimal strict control for discounted control problems.  For each $x\in E$, define $\kappa(x)$ as in (\ref{kappa-def}) and then define the set ${\cal K}_\alpha(x)$ by 
$${\cal K}_\alpha(x) = \left\{\big(z,(A^\alpha f_k(x,u),Bf_k(x,u))_{k\in \NN}\big)\in \RR^{2m+2}\times\RR^{\NN}: (z,u) \in \kappa(x)\right\}.$$
Under the condition that ${\cal K}_\alpha$ be closed and convex, Theorem~\ref{thm:strict-control} then establishes existence of the desired optimal strict control.  

Notice that $A^\alpha f(x,u) = Af(x,u) - \alpha f(x)$ so $A^\alpha f(x,u)$ is a translation of $Af(x,u)$ by $-\alpha f(x)$ for each $x \in E$.  Thus the set ${\cal K}_\alpha(x)$ is merely a translation of the set ${\cal K}(x)$ and hence ${\cal K}_\alpha(x)$ is closed and convex if and only if ${\cal K}(x)$ is closed and convex.  Therefore Condition~\ref{convexity} is a sufficient condition for the existence of an optimal strict control for the discounted control problem as well as for the long-term average problem.  This observation is codified in the following theorem.

\begin{thm} \label{thm:strict-control-disc}
Assume Conditions~\ref{gencnd}, \ref{costcnd1} and \ref{convexity} hold.  Let $c^*<\infty$ denote the optimal value of the linear program (\ref{disc-lp}).  Then there exists an optimal strict control $u^*:E\rightarrow U$.
\end{thm}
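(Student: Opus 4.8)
The plan is to reduce Theorem~\ref{thm:strict-control-disc} to Theorem~\ref{thm:strict-control} by exploiting the structural identity between the two linear programs. By Theorem~\ref{thm:disc-lp}, under the stated hypotheses there is an optimal pair $(\mu_0^*,\mu_1^*)$ for (\ref{disc-lp}), and Theorem~\ref{exis:disc} furnishes a process $X^*$ and random measure $\Gamma^*$ realizing this pair: with $\eta_0^*,\eta_1^*$ the transition functions of (\ref{etadefs}), the triple $(X^*,\Lambda^*,\tilde\Gamma^*)$ --- where $\Lambda_s^*=\eta_0^*(X^*(s),\cdot)$ and $\tilde\Gamma^*$ is obtained by disintegrating $\Gamma^*$ through $\eta_1^*$ --- solves the singular controlled martingale problem for $(A,B,\nu_0)$, satisfies the discounted budget constraints, and achieves discounted cost equal to the value of (\ref{disc-lp}). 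The goal is then to run the measurable-selection argument of Theorem~\ref{thm:strict-control}, but with the set ${\cal K}_\alpha(x)$ in place of ${\cal K}(x)$.

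The key observation, already recorded before the statement, is that $A^\alpha f(x,u)=Af(x,u)-\alpha f(x)$ differs from $Af(x,u)$ only by the $u$-independent translation $-\alpha f(x)$, so ${\cal K}_\alpha(x)$ is a translate of ${\cal K}(x)$ and is closed and convex precisely when ${\cal K}(x)$ is; thus Condition~\ref{convexity} supplies exactly the hypothesis required. Forming the averaged coefficients $\bar c_0,\bar c_1,\bar g_i,\bar h_i,\overline{A^\alpha f_k},\overline{Bf_k}$ by integrating against $\eta_0^*$ and $\eta_1^*$ as in the long-term average proof, Theorem~\ref{warga} places the resulting vector in ${\cal K}_\alpha(x)$ for each $x$, and Lemma~\ref{lem-dufo} then yields a measurable $u^*:E\to U$ together with a slack $v^*:E\to \RR_+^{2m+2}$ satisfying the pointwise analogues of (\ref{u-star-id1a})--(\ref{u-star-id3b}), with $A^\alpha f_k$ in place of $Af_k$. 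Because the $\alpha$-part of $A^\alpha$ does not depend on $u$, the selection matching $\overline{A^\alpha f_k}$ simultaneously matches $\overline{Af_k}$, so $u^*$ reproduces both $Af_k(x,u^*(x))=\overline{Af_k}(x)$ and $Bf_k(x,u^*(x))=\overline{Bf_k}(x)$.

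With these identities in hand the verification closes exactly as in Theorem~\ref{thm:strict-control}. Substituting $u^*$ into (\ref{mgp}) and invoking Condition~\ref{gencnd}(iii) shows that $(X^*,\delta_{\{u^*(X^*)\}},\delta_{\{u^*(\cdot)\}}\Gamma^*)$ is a genuine strict solution of the singular controlled martingale problem for $(A,B,\nu_0)$, so $u^*$ is a strict control; the matched $A^\alpha f_k$ identities, integrated against the state marginals $\mu_0^{*E},\mu_1^{*E}$ and combined with feasibility of $(\mu_0^*,\mu_1^*)$, give the adjoint relation of (\ref{disc-lp}); the slack $v^*$ together with lower semicontinuity gives the budget inequalities; and the $c_0,c_1$ identities give equality of costs, whence optimality. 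I expect no genuinely new obstacle beyond those already handled in Theorem~\ref{thm:strict-control}: the one point requiring care is that the process produced by Theorem~\ref{exis:disc} lacks the clean stationary/renewal structure of the long-term average setting, but since the selection is entirely pointwise in the state and the martingale characterization rests on Condition~\ref{gencnd}(iii), this absence does not affect the argument.
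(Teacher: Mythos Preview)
Your proposal is correct and follows essentially the same approach as the paper: the paper's argument, given in the discussion immediately preceding the theorem, is precisely the translation observation ${\cal K}_\alpha(x)={\cal K}(x)+(\text{$u$-independent shift})$, which allows the measurable-selection proof of Theorem~\ref{thm:strict-control} to be rerun verbatim with $A^\alpha$ in place of $A$. Your added remark that the lack of stationarity of the process from Theorem~\ref{exis:disc} is harmless---because the selection and the martingale verification via Condition~\ref{gencnd}(iii) are purely pointwise---is correct and a useful clarification.
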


\section{Inventory Control} \label{sect:example}
In this section, we highlight the results using the inventory control problem of Example~\ref{inventory-example}.  Due to the presence of the fixed cost $k_1$ per order, continuous ordering would result instantly in an infinite cost so orders are placed at a discrete set of times.  Consequently, $\mu_0$ is a measure on the state space alone.  The function $c_0$ of (\ref{inv-run-cost}) satisfies the requirements of Condition~\ref{costcnd1} once the control dependence is eliminated.

The ordering cost function $c_1(x,u) = k_1 + k_2 u$ only depends on the amount ordered so does not satisfy the inf-compactness requirement in the state variable.  Inf-compactness is solely used to establish the tightness of sequences of expected ordering measures $\{\mu_1^n\}$ and hence the existence of limiting measures.  At least two adjustments are possible.  The first is to impose a budget constraint in which the function $h$ is inf-compact while a second approach is to restrict the class of admissible controls.  Each results in a modified problem.  

Intuitively it seems reasonable that an optimal ordering policy will not order when the inventory is large nor wait until there is a large back-order.  It also seems reasonable that the inventory immediately after an order is received should not be overly large.  Thus consider a restriction on the class of admissible policies such that $\{(X(\tau_k-),X(\tau_k)): k \in \NN\} \subset K$ for some compact set $K \in \RR^2$.  The compact set may depend on $(\tau,Y)$.

Set ${\cal D} = C^2_c(\RR)$.  The generator $A$ of the drifted Brownian motion is 
$$Af (x) = \mbox{$\frac{\sigma^2}{2}$} f''(x) - \mu f'(x), \qquad x\in \RR$$
while the singular generator is
$$Bf(x,u) = f(x+u) - f(x), \qquad x \in \RR, u \in \RR_+.$$
The long-term average problem of minimizing $J_0(\tau,Y)$ of (\ref{inv-lta-cost}) over the restricted class of admissible policies is equivalent to the linear program
$$\begin{array}{lll}
\mbox{Minimize } & \displaystyle \int_\RR c_0(x)\, \mu_0(dx) + \int_{\RR\times \RR_+} c_1(x,u)\, \mu_0(dx\times du) & \\
\mbox{Subject to } & \displaystyle \int_\RR Af(x)\, \mu_0(dx) + \int_{\RR\times \RR_+} Bf(x,u)\, \mu_0(dx\times du) = 0, & \quad f \in C_c^2(\RR), \\
& \mu_0 \in {\cal P}(\RR), & \\
& \mu_1 \in {\cal M}(\RR\times \RR_+) \mbox{ with compact support.} &
\end{array} $$
The discounted problem of minimizing $J_\alpha(\tau,Y)$ of (\ref{inv-disc-cost}) over this restricted class is equivalent to the linear program
$$\begin{array}{lll}
\mbox{Minimize } & \displaystyle \int_\RR c_0(x)\, \mu_0(dx) + \int_{\RR\times \RR_+} c_1(x,u)\, \mu_0(dx\times du) & \\
\mbox{Subject to } & \displaystyle \int_\RR A^\alpha f(x)\, \mu_0(dx) + \int_{\RR\times \RR_+} Bf(x,u)\, \mu_0(dx\times du) = 0, & \quad f \in C_c^2(\RR), \\
& \mu_0 \in {\cal P}(\RR), & \\
& \mu_1 \in {\cal M}(\RR\times \RR_+) \mbox{ with compact support} &
\end{array} $$
or, using the rescaling of Remark~\ref{rescaling}, to the linear program 
$$\begin{array}{ll}
\mbox{Minimize } & \displaystyle \int_\RR c_0(x)\, \mu_0(dx) + \int_{\RR\times \RR_+} c_1(x,u)\, \mu_0(dx\times du)  \\
\mbox{Subject to } & \displaystyle \int_\RR (Af(x)-\alpha f(x)\, \mu_0(dx) + \int_{\RR\times \RR_+} Bf(x,u)\, \mu_0(dx\times du) = -\int_\RR f(x)\, \nu_0(dx), \\
& \hfill f \in C_c^2(\RR), \\
& \mu_0 \in {\cal M}(\RR) \mbox{ with } \mu_0(\RR) = \frac{1}{\alpha}, \\
& \mu_1 \in {\cal M}(\RR\times \RR_+) \mbox{ with compact support} 
\end{array} $$
in which $\nu_0$ denotes the initial distribution of the inventory level.

The recent work \cite{helm:17} solves the long-term average inventory control problem in which the dynamics of the inventory level between orders are given by a general diffusion process for very general cost functions $c_0$ and $c_1$ without restricting the class of admissible ordering functions.  Interestingly, the approach is very similar to this paper but the existence of limiting $\mu_1$ measures is not required and hence an equivalent linear program is not utilized.

\end{document}